\newtheorem{thm}{Theorem}
\newtheorem{prop}{Proposition}
\newtheorem{fact}{Fact}
\newtheorem{conj}{Conjecture}
\newcommand{\vertiii}[1]{{\left\vert\kern-0.2ex\left\vert\kern-0.2ex\left\vert #1 \right\vert\kern-0.2ex\right\vert\kern-0.2ex\right\vert}}
\newenvironment{proof}{{\bf Proof\,\,}}{\endproof\par}
\newcounter{spb}
\newcommand{\subpb}{(\alph{spb}) \addtocounter{spb}{1}}
\newcommand{\resetspb}{\setcounter{spb}{1}}
\def \openbox{$\sqcup\llap{$\sqcap$}$}
\def \endproof{\enskip \null \nobreak \hfill \openbox \par}
\newcommand{\limto}{\rightarrow}
\newcommand{\del}{\partial}
\newcommand{\bz}{\mathbf 0}
\newcommand{\R}{\mathbb R}
\begin{document}
\title{A Perturbation Inequality for the Schatten $p$--Quasi--Norm \\ and Its Applications in Low--Rank Matrix Recovery\thanks{This research is supported in part by the Hong Kong Research Grants Council (RGC) General Research Fund (GRF) Project CUHK 416413, and in part by a gift grant from Microsoft Research Asia.}}
\author{Man--Chung Yue\thanks{Department of Systems Engineering and Engineering Management, The Chinese University of Hong Kong, Shatin, N.~T., Hong Kong.  E--mail: {\tt mcyue@se.cuhk.edu.hk}} \and Anthony Man--Cho So\thanks{Department of Systems Engineering and Engineering Management, and, by courtesy, CUHK--BGI Innovation Institute of Trans--omics, The Chinese University of Hong Kong, Shatin, N.~T., Hong Kong.  E--mail: {\tt manchoso@se.cuhk.edu.hk}}}
\date{\today}
\maketitle

\begin{abstract}
In this paper, we establish the following perturbation result concerning the singular values of a matrix: Let $A,B \in \R^{m\times n}$ be given matrices, and let $f:\R_+\limto\R_+$ be a concave function satisfying $f(0)=0$.  Then, we have
$$ \sum_{i=1}^{\min\{m,n\}} \big| f(\sigma_i(A)) - f(\sigma_i(B)) \big| \le \sum_{i=1}^{\min\{m,n\}} f(\sigma_i(A-B)), $$
where $\sigma_i(\cdot)$ denotes the $i$--th largest singular value of a matrix.  This answers an open question that is of interest to both the compressive sensing and linear algebra communities.  In particular, by taking $f(\cdot)=(\cdot)^p$ for any $p \in (0,1]$, we obtain a perturbation inequality for the so--called Schatten $p$--quasi--norm, which allows us to confirm the validity of a number of previously conjectured conditions for the recovery of low--rank matrices via the popular Schatten $p$--quasi--norm heuristic.  We believe that our result will find further applications, especially in the study of low--rank matrix recovery.
\end{abstract}

{\bf Keywords:} Singular value perturbation inequality; Schatten quasi--norm; Low--rank matrix recovery

\section{Introduction} \label{sec:intro}
The problem of low--rank matrix recovery, with its many applications in computer vision~\cite{CS04,JLSX10}, trace regression~\cite{NW11,KLT11}, network localization~\cite{JM13,JSZ+13}, etc., has been attracting intense research interest in recent years.  In a basic version of the problem, the goal is to reconstruct a low--rank matrix from a set of possibly noisy linear measurements.  To achieve this, one immediate idea is to formulate the recovery problem as a rank minimization problem:
\begin{equation} \label{eq:RMP}
\begin{array}{c@{\quad}l}
\mbox{minimize} & \mbox{rank}(X) \\
\noalign{\smallskip}
\mbox{subject to} & \| \mathcal{A}(X) - y \|_2 \le \eta, \,\,\, X \in \R^{m\times n},
\end{array}
\end{equation}
where the linear measurement map $\mathcal{A}:\R^{m \times n} \limto \R^l$, the vector of measurements $y \in \R^l$, and the noise level $\eta \ge 0$ are given.  However, Problem (\ref{eq:RMP}) is NP--hard in general, as it includes the NP--hard vector cardinality minimization problem~\cite{N95} as a special case.  Moreover, since the rank function is discontinuous, Problem (\ref{eq:RMP}) can be challenging from a computational point--of--view.  To circumvent this intractability, a popular approach is to replace the objective of (\ref{eq:RMP}) with the so--called Schatten (quasi)--norm of $X$.  Specifically, given a matrix $X \in \R^{m\times n}$ and a number $p\in(0,1]$, let $\sigma_i(X)$ denote the $i$--th largest singular value of $X$ and define the {\it Schatten $p$--quasi--norm} of $X$ by
$$ \|X\|_p = \left( \sum_{i=1}^{\min\{m,n\}} \sigma_i^p(X) \right)^{1/p}. $$
One can then consider the following {\it Schatten $p$--quasi--norm heuristic} for low--rank matrix recovery:
\begin{equation} \label{eq:schatten}
\begin{array}{c@{\quad}l}
\mbox{minimize} & \|X\|_p^p \\
\noalign{\smallskip}
\mbox{subject to} & \| \mathcal{A}(X) - y \|_2 \le \eta, \,\,\, X \in \R^{m\times n}.
\end{array}
\end{equation}
Note that the function $X \mapsto \|X\|_p^p$ is continuous for each $p\in(0,1]$.  Thus, algorithmic techniques for continuous optimization can be used to tackle Problem (\ref{eq:schatten}).  The Schatten quasi--norm heuristic is motivated by the observation that $\|X\|_p^p \limto \mbox{rank}(X)$ as $p \searrow 0$.  In particular, when $p=1$, the function $X \mapsto \|X\|_1$ defines a norm---known as the {\it nuclear norm}---on the set of $m\times n$ matrices, and we obtain the well--known {\it nuclear norm heuristic}~\cite{FHB01}.  In this case, Problem (\ref{eq:schatten}) is a convex optimization problem that can be solved efficiently by various algorithms; see, e.g.,~\cite{HZY+13} and the references therein.  On the other hand, when $p \in (0,1)$, the function $X \mapsto \|X\|_p$ only defines a quasi--norm.  In this case, Problem (\ref{eq:schatten}) is a non--convex optimization problem and is NP--hard in general; cf.~\cite{GJY11}.  Nevertheless, a number of numerical algorithms implementing the Schatten $p$--quasi--norm heuristic (where $p \in (0,1)$) have been developed (see, e.g.,~\cite{MS12,NHD12,JSZ+13,LXY13} and the references therein), and they generally have better empirical recovery performance than the (convex) nuclear norm heuristic.

From a theoretical perspective, a natural and fundamental question concerning the aforementioned heuristics is about their recovery properties.  Roughly speaking, this entails determining the conditions under which a given heuristic can recover, either exactly or approximately, a solution to Problem (\ref{eq:RMP}).  A first study in this direction was done by Recht, Fazel and Parrilo~\cite{RFP10}, who showed that techniques used to analyze the $\ell_1$ heuristic for sparse vector recovery (see~\cite{S12} for an overview and further pointers to the literature) can be extended to analyze the nuclear norm heuristic.  Since then, recovery conditions based on the restricted isometry property (RIP) and various nullspace properties have been established for the nuclear norm heuristic; see, e.g.,~\cite{OMFH11,CR13,CZ13,JKN14} for some recent results.  In fact, many recovery conditions for the nuclear norm heuristic can be derived in a rather simple fashion from their counterparts for the $\ell_1$ heuristic by utilizing a perturbation inequality for the nuclear norm~\cite{OMFH11}.

Compared with the nuclear norm heuristic, recovery properties of the Schatten $p$--quasi--norm heuristic are much less understood, even though the corresponding heuristic for sparse vector recovery, namely the $\ell_p$ heuristic with $p \in (0,1)$, has been extensively studied; see, e.g.,~\cite{WXT11,WC13} and the references therein.  As first pointed out in~\cite{OMFH11} and later further elaborated in~\cite{LLLW12}, the difficulty seems to center around the following question, which concerns the validity of certain perturbation inequality for the Schatten $p$--quasi--norm:

\medskip
\noindent{\bf Question (Q)} {\it Given a number $p\in (0,1)$ and matrices $A,B \in \R^{m\times n}$, does the inequality 
\begin{equation} \label{eq:key-ineq}
\sum_{i=1}^{\min\{m,n\}} \left| \sigma_i^p(A) - \sigma_i^p(B) \right| \le \sum_{i=1}^{\min\{m,n\}} \sigma_i^p(A-B)
\end{equation}
hold?}

\medskip
\noindent Indeed, assuming the validity of (\ref{eq:key-ineq}), one can establish a {\it necessary and sufficient} nullspace--based condition for the recovery of low--rank matrices by the Schatten $p$--quasi--norm heuristic~\cite{OMFH11}.  This, coupled with the arguments in~\cite{OMFH11}, allows one to derive various recovery conditions for the Schatten $p$--quasi--norm heuristic from their counterparts for the $\ell_p$ heuristic~\cite{OMFH11}.  Moreover, one can obtain stronger RIP--based recovery guarantees for the Schatten $p$--quasi--norm heuristic~\cite{LLLW12}.  Thus, there is a strong motivation to study Question (Q).  As it turns out, long before the interest in low--rank matrix recovery takes shape, Ando~\cite{A88} has already shown that the perturbation inequality (\ref{eq:key-ineq}) is valid when $A,B$ are positive semidefinite.  This result is later rediscovered by Lai et al.~\cite{LLLW12}.  More recently, Zhang and Qiu~\cite{ZQ10} claimed to have established~(\ref{eq:key-ineq}) in its full generality.  However, as we shall explain in Section~\ref{sec:gap}, there is a critical gap in the proof.\footnote{This is also confirmed by the authors of~\cite{ZQ10} in a private correspondence.}  Thus, to the best of our knowledge, Question~(Q) remains open; see also~\cite[Section 7]{AK12}.


In this paper, we show that the perturbation inequality (\ref{eq:key-ineq}) is indeed valid, thereby giving the first complete answer to Question~(Q).  In fact, we shall prove the following more general result:
\begin{thm} \label{thm:cav-perturb}
Let $A,B \in \R^{m\times n}$ be given matrices.  Suppose that $f:\R_+\limto\R_+$ is a concave function satisfying $f(0)=0$.  Then, we have
\begin{equation}\label{eq:cav-perturb}
\sum_{i=1}^{\min\{m,n\}} \big| f(\sigma_i(A)) - f(\sigma_i(B)) \big| \le \sum_{i=1}^{\min\{m,n\}} f(\sigma_i(A-B)).
\end{equation}
\end{thm}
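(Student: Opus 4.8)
The plan is to reduce the statement to a known majorization-type inequality for singular values and then exploit the structure of concave functions vanishing at the origin. First I would recall the key fact, due to Thompson, that for any $A, B \in \R^{m\times n}$ the singular values satisfy the weak majorization
$$ \sum_{i=1}^k \big| \sigma_i(A) - \sigma_i(B) \big| \le \sum_{i=1}^k \sigma_i(A-B) \qquad \text{for all } k = 1, \dots, \min\{m,n\}, $$
which is the case $f = \mathrm{id}$ of \eqref{eq:cav-perturb}, but in its stronger partial-sum form. The idea is that a general concave $f$ with $f(0)=0$ can be written as a nonnegative mixture of elementary ``angle'' functions $t \mapsto \min\{t, c\}$ (for $c > 0$) together with a linear term $t \mapsto \lambda t$ with $\lambda \ge 0$; since the desired inequality is linear in $f$, it would suffice to prove it for each of these building blocks.

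The main work, then, is to handle $f(t) = \min\{t, c\}$. Here I would write, for the vector $\big( |f(\sigma_i(A)) - f(\sigma_i(B))| \big)_i$ rearranged in decreasing order, the partial sums and compare them against $\sum_{i=1}^k f(\sigma_i(A-B))$. The point is that $f(t) = \min\{t,c\}$ is $1$-Lipschitz, so $|f(\sigma_i(A)) - f(\sigma_i(B))| \le |\sigma_i(A) - \sigma_i(B)|$; on the other hand $f$ is also concave and nondecreasing, so on the right-hand side one can pass from $\sigma_i(A-B)$ to $f(\sigma_i(A-B))$ only at a loss, which runs the wrong way. The correct route is instead to use that $f$ concave, nondecreasing, $f(0)=0$ makes $t \mapsto f(t)$ itself ``subadditive-like'' and to invoke the Ky Fan style characterization: a weak majorization $x \prec_w y$ implies $\sum_i g(x_i) \le \sum_i g(y_i)$ for every nondecreasing concave $g$ with $g(0) = 0$. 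Applying this with $x_i = |\sigma_i(A) - \sigma_i(B)|$ (sorted) and $y_i = \sigma_i(A-B)$, which are weakly majorized by Thompson's inequality, yields
$$ \sum_{i} f\big( |\sigma_i(A) - \sigma_i(B)| \big) \le \sum_i f\big( \sigma_i(A-B) \big). $$
Finally, since $f$ is $1$-Lipschitz and nondecreasing with $f(0)=0$, one checks $\big| f(\sigma_i(A)) - f(\sigma_i(B)) \big| \le f\big( |\sigma_i(A) - \sigma_i(B)| \big)$ — this is where concavity enters decisively, via $f(a) - f(b) \le f(a-b)$ for $a \ge b \ge 0$ — and combining the last two displays gives \eqref{eq:cav-perturb}.

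The step I expect to be the genuine obstacle is the pointwise inequality $\big| f(\sigma_i(A)) - f(\sigma_i(B)) \big| \le f\big(|\sigma_i(A)-\sigma_i(B)|\big)$ together with the implication ``weak majorization $\Rightarrow$ inequality under concave nondecreasing $g$ with $g(0)=0$'': the latter is standard for \emph{concave} $g$ when the majorization is of the ordinary (not weak) type, but the version I need is for \emph{weak} majorization, which requires the monotonicity of $g$ and a small truncation argument. I would isolate this as a lemma. An alternative, and perhaps cleaner, line of attack avoids Thompson's partial-sum inequality altogether: prove \eqref{eq:cav-perturb} first for $A, B$ replaced by their ``dilations'' $\tilde A = \begin{bmatrix} 0 & A \\ A^\top & 0 \end{bmatrix}$, $\tilde B = \begin{bmatrix} 0 & B \\ B^\top & 0 \end{bmatrix}$, which are symmetric, so that the eigenvalues of $\tilde A$ are $\pm\sigma_i(A)$; then Ando's positive-semidefinite case of \eqref{eq:key-ineq}, suitably adapted via the concave function $f$ extended evenly, can be brought to bear on $(\tilde A + \|\tilde A - \tilde B\| I)$ and $(\tilde B + \|\tilde A - \tilde B\| I)$, shifting the spectra into the positive orthant. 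I would pursue whichever of these two reductions keeps the concavity hypothesis cleanest; the first seems more transparent and is the one I would write up.
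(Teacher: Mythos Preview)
Your main approach has a genuine, fatal gap at the step you call the ``Ky Fan style characterization.'' You claim that weak majorization $x \prec_w y$ (both vectors nonnegative) implies $\sum_i g(x_i) \le \sum_i g(y_i)$ for every nondecreasing concave $g$ with $g(0)=0$. This is false: take $x=(1,1)$, $y=(2,0)$, so that $x \prec_w y$ (in fact $x \prec y$), and $g(t)=\sqrt{t}$; then $\sum_i g(x_i)=2 > \sqrt{2}=\sum_i g(y_i)$. Weak submajorization transfers through \emph{convex} increasing functions, not concave ones; for concave increasing $g$ the relevant order is weak \emph{super}majorization, and Mirsky's inequality gives you no control on the lower partial sums. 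So from $\big(|\sigma_i(A)-\sigma_i(B)|\big) \prec_w \sigma(A-B)$ you cannot deduce $\sum_i f\big(|\sigma_i(A)-\sigma_i(B)|\big) \le \sum_i f(\sigma_i(A-B))$. Your Step~4 (the pointwise bound $|f(a)-f(b)|\le f(|a-b|)$, via subadditivity of $f$) is correct, but it only reduces the target to this last inequality, which is precisely the hard part. The integral representation via $t\mapsto\min\{t,c\}$ runs into the same wall: for these building blocks the inequality is already nontrivial and not a consequence of Mirsky.

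Your alternative route --- dilate to symmetric $\tilde A,\tilde B$ and shift by $cI$ to reach the positive semidefinite case of Ando --- also does not go through. The shift leaves $\tilde A-\tilde B$ unchanged, so the right-hand side is fine, but on the left you obtain $\sum_i \big|f(\lambda_i(\tilde A)+c)-f(\lambda_i(\tilde B)+c)\big|$, which has no useful relation to $\sum_i \big|f(|\lambda_i(\tilde A)|)-f(|\lambda_i(\tilde B)|)\big|$: the eigenvalues of $\tilde A$ come in $\pm$ pairs, and the shift destroys that symmetry in a way $f$ does not respect. The paper's proof proceeds quite differently: after the reduction to symmetric $A,B$ (which you also identified), it minimizes $\sum_i f\big(\sigma_i(\Sigma_A - Q\Sigma_B Q^T)\big)$ over $Q\in\mathcal{O}^n$, shows via a first-order perturbation calculation that at the minimizer $Q_0$ the matrices $\Sigma_A$ and $Q_0\Sigma_B Q_0^T$ must commute (a Fiedler-type argument using the one-parameter group $\exp(tD)$ with $D$ the commutator), and thereby reduces to the diagonal case, which is elementary.
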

Since $x\mapsto |x|^p$ is concave on $\R_+$ for any $p \in (0,1]$, by taking $f(\cdot)=(\cdot)^p$ in (\ref{eq:cav-perturb}), we immediately obtain (\ref{eq:key-ineq}).  Our proof of (\ref{eq:cav-perturb}), which is given in Section~\ref{sec:pf}, is inspired in part by the work of Fiedler~\cite{F71} and makes heavy use of matrix perturbation theory.  Then, in Section~\ref{sec:app}, we shall discuss some applications of the perturbation inequality (\ref{eq:key-ineq}) in the study of low--rank matrix recovery.  Finally, we close with some concluding remarks in Section~\ref{sec:concl}.

The following notations will be used throughout this paper.  Let $\mathcal{S}^n$ (resp.~$\mathcal{O}^n$) denote the set of $n\times n$ real symmetric (resp.~orthogonal) matrices.  For an arbitrary matrix $Z \in \R^{m\times n}$, we use $\sigma(Z)$ and $\sigma_i(Z)$ to denote its vector of singular values and $i$--th largest singular value, respectively.  For $Z \in \mathcal{S}^n$, we use $\lambda_i(Z)$ to denote its $i$--th largest eigenvalue.  The spectral norm (i.e., the largest singular value) and Frobenius norm of $Z$ are denoted by $\|Z\|$ and $\|Z\|_F$, respectively.  Given a vector $v$, we use $\mbox{Diag}(v)$ to denote the diagonal matrix with $v$ on the diagonal.  Similarly, given matrices $A_1,\ldots,A_l$, we use $\mbox{BlkDiag}(A_1,\ldots,A_l)$ to denote the block diagonal matrix whose $i$--th diagonal block is $A_i$, for $i=1,\ldots,l$.  We say that $Z=O(\alpha)$ if $\|Z\|/\alpha$ is uniformly bounded as $\alpha\limto0$.

\section{Gap in the Zhang--Qiu Proof} \label{sec:gap}
In this section, we review the main steps in Zhang and Qiu's proof of the perturbation inequality~(\ref{eq:cav-perturb}) and explain the gap in the proof.  To set the stage, let us recall two classic perturbation inequalities:
\begin{enumerate}
\item[\subpb] (Lidskii--Wielandt Eigenvalue Perturbation Inequality) Let $A,B \in \mathcal{S}^l$ be given.  Then, for any $k \in \{1,\ldots,l\}$ and $i_1,\ldots,i_k \in \{1,\ldots,l\}$ satisfying $1\le i_1<\cdots<i_k\le l$,
\begin{equation} \label{eq:lw-ineq}
\sum_{j=1}^k ( \lambda_{i_j}(A) - \lambda_{i_j}(B) ) \le \sum_{i=1}^k \lambda_i(A-B);
\end{equation}
see, e.g.,~\cite[Chapter IV, Theorem 4.8]{SS90}.

\item[\subpb] (Mirsky Singular Value Perturbation Inequality) Let $\bar{A},\bar{B} \in \R^{m\times n}$ be given.  Set $\bar{l} = \min\{m,n\}$.  Then, for any $k \in \{1,\ldots,\bar{l}\}$ and $i_1,\ldots,i_k \in \{1,\ldots,\bar{l}\}$ satisfying $1 \le i_1 < \cdots < i_k \le \bar{l}$,
\begin{equation} \label{eq:mir-ineq}
\sum_{j=1}^k \left| \sigma_{i_j}(\bar{A}) - \sigma_{i_j}(\bar{B}) \right| \le \sum_{i=1}^k \sigma_i( \bar{A}-\bar{B} );
\end{equation}
see, e.g.,~\cite[Chapter IV, Theorem 4.11]{SS90}.
\end{enumerate}
\resetspb
Mirsky~\cite{M60} observed that~(\ref{eq:mir-ineq}) is a simple consequence of~(\ref{eq:lw-ineq}), and his argument goes as follows.  Let
\begin{equation} \label{eq:dilate}
A = \left[ \begin{array}{cc} \bz & \bar{A} \\ \bar{A}^T & \bz \end{array} \right] \in \mathcal{S}^{m+n}, \quad B = \left[ \begin{array}{cc} \bz & \bar{B} \\ \bar{B}^T & \bz \end{array} \right] \in \mathcal{S}^{m+n},
\end{equation}
and suppose without loss of generality that $m\le n$.  It is well--known (see Fact~\ref{fact:sym} below) that $0$ is an eigenvalue of both $A$ and $B$ of multiplicity $n-m$, and the remaining eigenvalues of $A$ and $B$ are $\pm\sigma_1(\bar{A}),\ldots,\pm\sigma_m(\bar{A})$ and $\pm\sigma_1(\bar{B}),\ldots,\pm\sigma_m(\bar{B})$, respectively.  Thus, we have
$$ \left\{ \lambda_i(A) - \lambda_i(B) : i=1,\ldots,m+n \right\} = \left\{ \pm\left| \sigma_i(\bar{A}) - \sigma_i(\bar{B}) \right| : i=1,\ldots,m \right\} \cup \{0\}. $$
In particular, by substituting~(\ref{eq:dilate}) into~(\ref{eq:lw-ineq}), we obtain~(\ref{eq:mir-ineq}).

Motivated by the above argument, Zhang and Qiu first established a Lidskii--Wielandt--type singular value perturbation inequality by extending a matrix--valued triangle inequality of Bourin and Uchiyama~\cite{BU07} and invoking Horn's inequalities for characterizing the eigenvalues of sums of Hermitian matrices~\cite{F00}.  Specifically, they showed that for any concave function $f:\R_+\limto\R_+$ and matrices $A,B \in \R^{m\times n}$, the inequality 
\begin{equation} \label{eq:f-lw-ineq}
   \sum_{j=1}^k \left( f(\sigma_{i_j}(A)) - f(\sigma_{i_j}(B)) \right) \le \sum_{i=1}^k f(\sigma_i(A-B))
\end{equation}
holds for any $k \in \{1,\ldots,\bar{l}\}$ and $i_1,\ldots,i_k \in \{1,\ldots,\bar{l}\}$ satisfying $1 \le i_1<\cdots<i_k \le \bar{l}$, where $\bar{l}=\min\{m,n\}$; cf.~\cite[Theorem 2.1]{ZQ10}.  They then claimed that the perturbation inequality~(\ref{eq:cav-perturb}) follows by applying Mirsky's argument above to~(\ref{eq:f-lw-ineq}); cf.~\cite[Corollary 2.3]{ZQ10}.  However, the reasoning in this last step is flawed.  Indeed, the inequality~(\ref{eq:f-lw-ineq}) is concerned with {\it singular values}, while the inequality~(\ref{eq:lw-ineq}) is concerned with {\it eigenvalues}.  In particular, for the matrices $A,B$ given in~(\ref{eq:dilate}), we only have
$$ \left\{ f(\sigma_i(A)) - f(\sigma_i(B)): i=1,\ldots,m+n \right\} = \left\{ f(\sigma_i(\bar{A})) - f(\sigma_i(\bar{B})):i=1,\ldots,\bar{l} \right\} \cup \{0\}, $$
and there is no guarantee that the set on the right--hand side contains any element of the set 
$$ \left\{ \left| f(\sigma_i(\bar{A})) - f(\sigma_i(\bar{B})) \right| : i=1,\ldots,\bar{l} \right\}. $$
Hence, Mirsky's argument does not lead to the desired conclusion.  In fact, we do not see a straightforward way of proving (\ref{eq:cav-perturb}) using~(\ref{eq:f-lw-ineq}).  The difficulty stems in part from the fact that $f$ is always non--negative, while the eigenvalues in~(\ref{eq:lw-ineq}) can be negative.  This suggests that~(\ref{eq:f-lw-ineq}) is fundamentally different from~(\ref{eq:lw-ineq}).


\section{Proof of the Perturbation Inequality (\ref{eq:cav-perturb})} \label{sec:pf}
In this section, we give the first complete proof of the perturbation inequality~(\ref{eq:cav-perturb}).  The proof can be divided into five steps.  

\medskip
\noindent{\it Step 1: Reduction to the Symmetric Case}

\smallskip
A first observation concerning~(\ref{eq:cav-perturb}) is that we can restrict our attention to the case where both $A$ and $B$ are symmetric.  To prove this, consider the linear operator $\Xi:\mathbb{R}^{m \times n} \limto \mathcal{S}^{m+n}$ given by
$$
\Xi(Z) = \left[
\begin{array}{cc}
\bz & Z \\
Z^T & \bz
\end{array}
\right].
$$
We shall make use of the following standard fact, which establishes a relationship between the singular value decomposition of an arbitrary matrix $Z \in\R^{m \times n}$ and the spectral decomposition of $\Xi(Z) \in \mathcal{S}^{m+n}$:
\begin{fact} \label{fact:sym}
(cf.~\cite[Chapter I, Theorem 4.2]{SS90}) Let $Z \in \R^{m\times n}$ be a given matrix with $m \le n$.  Consider its singular value decomposition $Z = U \left[\begin{array}{cc} \Sigma & \bz \end{array}\right] V^T$, where $U \in \R^{m\times m}$ and $V \in \R^{n\times n}$ are orthogonal and $\Sigma = \mbox{Diag}( \sigma_1(Z),\ldots,\sigma_m(Z) ) \in \mathcal{S}^m$ is diagonal.  Write $V = \left[ \begin{array}{cc} V^1 & V^2 \end{array} \right]$, where $V^1 \in \R^{n \times m}$ and $V^2 \in \R^{n \times (n-m)}$.  Then, the matrix $\Xi(Z)$ admits the spectral decomposition
$$ \Xi(Z) = W \left[ \begin{array}{ccc} \Sigma & \bz & \bz \\ \bz & -\Sigma & \bz \\ \bz & \bz & \bz \end{array} \right] W^T, $$
where
$$ W = \frac{1}{\sqrt{2}} \left[ \begin{array}{ccc} U & U & \bz \\ V^1 & -V^1 & \sqrt{2}\,V^2 \end{array} \right] $$
is orthogonal.  In particular, $0$ is an eigenvalue of $\Xi(Z)$ of multiplicity $n-m$, and the remaining eigenvalues of $\Xi(Z)$ are $\pm \sigma_1(Z),\ldots,\pm \sigma_m(Z)$.  
\end{fact}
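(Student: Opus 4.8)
The plan is to establish Fact~\ref{fact:sym} by direct verification: we take the matrices $W$ and $D = \mbox{BlkDiag}(\Sigma,-\Sigma,\bz) \in \mathcal{S}^{m+n}$ exactly as written in the statement, show that $W$ is orthogonal, and then show that $W D W^T = \Xi(Z)$. Once this is done, the final assertion about the spectrum of $\Xi(Z)$ is immediate: an identity $\Xi(Z) = W D W^T$ with $W \in \mathcal{O}^{m+n}$ and $D$ diagonal exhibits the diagonal entries of $D$ as the eigenvalues of $\Xi(Z)$ (counted with multiplicity), and these are precisely $\pm\sigma_1(Z),\ldots,\pm\sigma_m(Z)$ together with $n-m$ copies of $0$.

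For the orthogonality of $W$, I would first record the relations coming from the given SVD: since $V = [\,V^1\ V^2\,]$ is orthogonal, $(V^1)^T V^1 = I_m$, $(V^2)^T V^2 = I_{n-m}$, and $(V^1)^T V^2 = \bz$, and likewise $U^T U = U U^T = I_m$. Using only these, one computes $W^T W$ blockwise and finds that it collapses to $I_{m+n}$: the first two diagonal blocks are each $\tfrac12\big(U^T U + (V^1)^T V^1\big) = I_m$, the trailing diagonal block is $(V^2)^T V^2 = I_{n-m}$, and every off-diagonal block vanishes (the $(1,2)$ block is $\tfrac12\big(U^T U - (V^1)^T V^1\big) = \bz$, and the blocks involving the third column strip vanish because $(V^1)^T V^2 = \bz$). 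Since $W$ is square, $W^T W = I_{m+n}$ already gives $W \in \mathcal{O}^{m+n}$. For the identity $W D W^T = \Xi(Z)$, I would use the restatement $Z = U\Sigma (V^1)^T$ (which is just $[\,\Sigma\ \bz\,]V^T = \Sigma (V^1)^T$), compute
$$ W D = \frac{1}{\sqrt 2}\left[ \begin{array}{ccc} U\Sigma & -U\Sigma & \bz \\ V^1\Sigma & V^1\Sigma & \bz \end{array} \right], $$
and then multiply on the right by $W^T$. The $(1,1)$ and $(2,2)$ blocks of the product are $\tfrac12\big(U\Sigma U^T - U\Sigma U^T\big) = \bz$ and $\tfrac12\big(V^1\Sigma(V^1)^T - V^1\Sigma(V^1)^T\big) = \bz$, while the $(1,2)$ and $(2,1)$ blocks are $U\Sigma (V^1)^T = Z$ and $V^1\Sigma U^T = Z^T$; hence $W D W^T = \Xi(Z)$.

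There is no genuine obstacle here beyond careful bookkeeping: one must keep the block dimensions aligned throughout (the top block row of $W$ has $m$ rows, the bottom $n$ rows, and the three column strips have widths $m$, $m$, and $n-m$), and invoke the orthogonality of $U$ and $V$ at the right places. The only point worth emphasizing conceptually is that the $1/\sqrt 2$ scaling in the first $2m$ columns of $W$ is exactly what normalizes those columns, since each is a unit vector of $\R^m$ stacked on a unit vector of $\R^n$; the last $n-m$ columns, built from $V^2$ alone, are already orthonormal and require no rescaling.
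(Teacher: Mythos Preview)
Your proof is correct. Note, however, that the paper does not actually supply a proof of this statement: it is recorded as a \emph{Fact} with a citation to \cite[Chapter I, Theorem 4.2]{SS90} and is used without further justification. Your direct block-computation verification---checking $W^TW = I_{m+n}$ using the orthogonality relations among $U$, $V^1$, $V^2$, and then computing $WDW^T$ block by block---is exactly the standard argument one finds in the cited reference, so there is nothing to compare.
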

Fact \ref{fact:sym} implies that the $i$--th largest singular value of $\Xi(Z)$ is given by
\begin{equation} \label{eq:sym-sv}
\sigma_i(\Xi(Z)) = \left\{
\begin{array}{c@{\quad}l}
   \sigma_{\lceil i/2 \rceil}(Z) & \mbox{for } i=1,\ldots,2m, \\
   \noalign{\medskip}
   0 & \mbox{for } i=2m+1,\ldots,m+n.
\end{array}
\right.
\end{equation}
This in turn implies the following result:
\begin{prop} \label{prop:sym}
The inequality (\ref{eq:cav-perturb}) holds for all matrices $A,B \in \R^{m\times n}$ iff it holds for all symmetric matrices $A,B \in \mathcal{S}^l$.
\end{prop}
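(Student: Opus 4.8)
The plan is to establish the two implications of Proposition \ref{prop:sym} separately, with all the substance residing in the ``symmetric $\Rightarrow$ general'' direction, which is the one actually used in the remaining steps of the proof. The reverse implication (``general $\Rightarrow$ symmetric'') is immediate: a symmetric matrix $A \in \mathcal{S}^l$ is in particular a member of $\R^{l\times l}$, so the case $m=n=l$ of the general inequality already subsumes every symmetric pair, and there is nothing to prove.

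For the forward implication, I would fix arbitrary $A,B \in \R^{m\times n}$ and, since the singular values of a matrix and of its transpose coincide, assume without loss of generality that $m\le n$, so that $\bar{l}:=\min\{m,n\}=m$ and Fact \ref{fact:sym} (hence (\ref{eq:sym-sv})) applies to $\Xi(A),\Xi(B),\Xi(A-B) \in \mathcal{S}^{m+n}$. The idea is to apply the assumed symmetric case of (\ref{eq:cav-perturb}) to the dilated pair $\Xi(A),\Xi(B)$ and then read off the original inequality. The only inputs needed are that $\Xi$ is linear, so that $\Xi(A)-\Xi(B)=\Xi(A-B)$, and that by (\ref{eq:sym-sv}) the ordered singular values of $\Xi(Z)$ are $\sigma_1(Z),\sigma_1(Z),\sigma_2(Z),\sigma_2(Z),\ldots,\sigma_m(Z),\sigma_m(Z)$ followed by $n-m$ copies of $0$.

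With these in hand the computation is pure bookkeeping: using $f(0)=0$ to kill the contribution of the trailing zero singular values, the left-hand side of (\ref{eq:cav-perturb}) for the pair $\Xi(A),\Xi(B)$ collapses to $2\sum_{i=1}^{m}\big|f(\sigma_i(A))-f(\sigma_i(B))\big|$ (each term appearing twice because of the doubled singular values), while the right-hand side collapses to $2\sum_{i=1}^{m}f(\sigma_i(A-B))$; dividing by $2$ recovers (\ref{eq:cav-perturb}) for $A,B$. I do not expect a genuine obstacle here --- the argument is entirely driven by Fact \ref{fact:sym} --- and the only places warranting a moment's care are the harmless reduction to the case $m\le n$ via transposition and the correct matching of the repeated singular values in (\ref{eq:sym-sv}) when pairing up the terms of the two sums.
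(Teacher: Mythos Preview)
Your proposal is correct and follows essentially the same argument as the paper: apply the symmetric case of (\ref{eq:cav-perturb}) to the dilations $\Xi(A),\Xi(B)\in\mathcal{S}^{m+n}$, use the linearity of $\Xi$ together with (\ref{eq:sym-sv}) and $f(0)=0$ to obtain twice the desired inequality, and divide by $2$. The only additional remarks you make---the transposition justifying $m\le n$ and the explicit appeal to $f(0)=0$ for the trailing zero singular values---are minor clarifications of steps the paper leaves implicit.
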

\begin{proof}
The ``only if'' part of the proposition is clear.  Suppose then the inequality (\ref{eq:cav-perturb}) holds for all symmetric matrices $A,B \in \mathcal{S}^l$.  Consider arbitrary matrices $A,B \in \R^{m \times n}$, and without loss of generality, suppose that $m \le n$.  By assumption and the linearity of $\Xi$, we have
$$ \sum_{i=1}^{m+n} \big| f(\sigma_i(\Xi(A))) - f(\sigma_i(\Xi(B))) \big| \le \sum_{i=1}^{m+n} f(\sigma_i(\Xi(A-B))). $$
Together with (\ref{eq:sym-sv}), this implies that 
\begin{eqnarray*}
2 \sum_{i=1}^{m} \big| f(\sigma_i(A)) - f(\sigma_i(B)) \big| &=& \sum_{i=1}^{2m} \big| f(\sigma_i(\Xi(A))) - f(\sigma_i(\Xi(B))) \big| \\
\noalign{\medskip}
&\le& \sum_{i=1}^{2m} f(\sigma_i(\Xi(A-B))) \\
\noalign{\medskip}
&=& 2 \sum_{i=1}^{m} f(\sigma_i(A-B)).
\end{eqnarray*}
This completes the proof.
\end{proof}

\medskip
\noindent In view of Proposition \ref{prop:sym}, we will focus on proving~(\ref{eq:cav-perturb}) for the case where $A,B$ are symmetric.  Our strategy is to first establish~(\ref{eq:cav-perturb}) for {\it well--behaved} $f$ (the precise definition will be given shortly).  Then, using a limiting argument, we show that the result can be extended to cover general $f$.

\medskip
\noindent{\it Step 2: Local Behavior of a Well--Behaved $f$}

\smallskip
Let us begin by reviewing some basic facts from convex analysis, as well as introducing some definitions and notations.  By the concavity of $f$, for any $x_l,x_r,y_l,y_r\ge0$ satisfying $x_l < x_r$, $y_l < y_r$, $x_l \le y_l$, and $x_r \le y_r$, we have
\begin{equation} \label{eq:chord-ineq}
\frac{f(x_r)-f(x_l)}{x_r-x_l} \ge \frac{f(y_r)-f(x_l)}{y_r-x_l} \ge \frac{f(y_r)-f(y_l)}{y_r-y_l};
\end{equation}
cf.~\cite[Chapter 5, Lemma 16]{R88}.  This implies that for each $x>0$, the right--hand derivative of $f$ at $x$, which is defined as 
$$ d_f(x) = \lim_{\tau\searrow0} \frac{f(x+\tau)-f(x)}{\tau}, $$
exists and is finite.  Moreover, we have $f(y) \le f(x) + d_f(x)(y-x)$ for any $y\ge0$.  

Now, define the extension $\bar{d}_f:\R_+\limto\R\cup\{+\infty\}$ of $d_f:\R_{++}\limto\R$ by
$$ \bar{d}_f(x) = \left\{ 
\begin{array}{c@{\quad}l}
d_f(x) & \mbox{for } x > 0, \\
\noalign{\medskip}
\displaystyle{ \limsup_{t\searrow0} d_f(t) } & \mbox{for } x=0.
\end{array}
\right.
$$
Using (\ref{eq:chord-ineq}), it can be easily verified that $\bar{d}_f(y) \ge \bar{d}_f(x)$ for all $x \ge y \ge 0$.  We say that $f$ is {\it well--behaved} if $\bar{d}_f(x)<+\infty$ for all $x\ge0$.  Note that for a well--behaved $f$, we have
\begin{equation} \label{eq:supergrad-ineq}
f(y) \le f(x) + \bar{d}_f(x)(y-x)
\end{equation}
for all $x,y\ge0$.

Let $M \in \mathcal{S}^n$ be given.  We say that $\pi=(\pi_1,\ldots,\pi_n)$ is a {\it spectrum--sorting permutation of $M$} if $\pi$ is a permutation of $\{1,\ldots,n\}$ and $\sigma_i(M) = |\lambda_{\pi_i}(M)|$ for $i=1,\ldots,n$.  Note that there can be more than one spectrum--sorting permutation of $M$, as multiple eigenvalues of $M$ can have the same magnitude.  Now, given a spectrum--sorting permutation $\pi$ of $M$, let $M = U \Lambda U^T$ be the spectral decomposition of $M$, where $\Lambda = \mbox{Diag}(\lambda_{\pi_1}(M),\ldots,\lambda_{\pi_n}(M)) \in \mathcal{S}^n$.  Furthermore, define $M_\pi = U \Lambda_\pi U^T$, where $\Lambda_\pi = \mbox{Diag}(s_1,\ldots,s_n) \in \mathcal{S}^n$ and 
$$ s_i = \mbox{sgn}(\lambda_{\pi_i}(M)) \cdot \bar{d}_f(\sigma_i(M)) \quad\mbox{for } i=1,\ldots,n. $$
Our immediate objective is to prove the following theorem, which is the crux of our proof of the perturbation inequality (\ref{eq:cav-perturb}):
\begin{thm} \label{thm:schatten-local}
Let $M,N \in\mathcal{S}^n$ be given.  Suppose that $f$ is well--behaved.  Then, for any spectrum--sorting permutation $\pi$ of $M$ and any scalar $t>0$,
$$ \sum_{i=1}^n f(\sigma_i(M+tN)) \le \sum_{i=1}^n f(\sigma_i(M)) + t \cdot {\rm tr}\left( N M_\pi \right) + O(t^2). $$
\end{thm}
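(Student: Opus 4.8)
The plan is to pass to the eigenbasis of $M$, use classical perturbation theory for analytic families of symmetric matrices to track each eigenvalue of $M+tN$ to first order in $t$, take absolute values to recover the singular values, and then apply the exact supergradient inequality (\ref{eq:supergrad-ineq}) term by term. First I would reduce to the case where $M$ is the diagonal matrix $\Lambda=\mbox{Diag}(\lambda_{\pi_1}(M),\dots,\lambda_{\pi_n}(M))$: writing $M=U\Lambda U^{T}$ and $\tilde N=U^{T}NU$ as in the definition of $M_\pi$, and using that the singular values are unchanged under $Z\mapsto U^{T}ZU$ while ${\rm tr}(NM_\pi)={\rm tr}(\tilde N\Lambda_\pi)$, the claim for $(M,N)$ is equivalent to the claim for $(\Lambda,\tilde N)$. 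So assume $M=\Lambda$, with its diagonal entries sorted in non-increasing magnitude, and partition $\{1,\dots,n\}$ into the blocks on which the eigenvalue is constant; call them $I_1,\dots,I_r$, write $\nu_j$ for the common eigenvalue on $I_j$, and let $\tilde N_{jj}$ be the principal submatrix of $\tilde N$ indexed by $I_j$.

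Then comes the perturbation step. Since $t\mapsto\Lambda+t\tilde N$ is an analytic family of symmetric matrices, its eigenvalues split into analytic branches $\hat\lambda_i(t)=\lambda_{\pi_i}(M)+t\mu_i+O(t^{2})$ in which, for each $j$, the multiset $\{\mu_i:i\in I_j\}$ equals the spectrum of $\tilde N_{jj}$ (and one may take the $t=0$ eigenvectors in block $I_j$ to be eigenvectors of $\tilde N_{jj}$). The only consequence I need is the trace identity $\sum_{i\in I_j}\mu_i={\rm tr}(\tilde N_{jj})=\sum_{i\in I_j}\tilde N_{ii}$. Taking absolute values: if $\nu_j\neq 0$ then for small $t>0$ the branch $\hat\lambda_i(t)$ keeps the sign of $\nu_j$, so the associated singular value is $\sigma_i(M)+t\,\mbox{sgn}(\nu_j)\mu_i+O(t^{2})$; if $\nu_j=0$ it equals $|t\mu_i+O(t^{2})|\le t|\mu_i|+O(t^{2})$.

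Now I apply $f$ via (\ref{eq:supergrad-ineq}), block by block. For $i$ in a block with $\nu_j\neq 0$, taking $x=\sigma_i(M)>0$ gives $f(\sigma_i(M+tN))\le f(\sigma_i(M))+\bar{d}_f(\sigma_i(M))\big(t\,\mbox{sgn}(\nu_j)\mu_i+O(t^{2})\big)=f(\sigma_i(M))+t\,s_i\mu_i+O(t^{2})$, because $s_i=\mbox{sgn}(\nu_j)\,\bar{d}_f(\sigma_i(M))$ is constant on $I_j$ and the finite factor $\bar{d}_f(\sigma_i(M))$ absorbs the $O(t^{2})$; summing over $i\in I_j$ and invoking the trace identity collapses this block to $\sum_{i\in I_j}f(\sigma_i(M))+t\sum_{i\in I_j}s_i\tilde N_{ii}+O(t^{2})$. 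For the block with $\nu_j=0$ (contiguous, occupying the last positions), $f(\sigma_i(M))=f(0)=0$, and (\ref{eq:supergrad-ineq}) with $x=0$ gives $f(\sigma_i(M+tN))\le\bar{d}_f(0)\,\sigma_i(M+tN)\le t\,\bar{d}_f(0)\,|\mu_i|+O(t^{2})$; finiteness of $\bar{d}_f(0)$, i.e., well-behavedness, is exactly what makes this useful, and after choosing the eigenvectors spanning $\ker M$ to diagonalize $\tilde N$ restricted there and reading the sign attached to the zero eigenvalue off that restriction, the sum of these bounds over the block is again $t\sum s_i\tilde N_{ii}+O(t^{2})$. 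Adding the blocks yields $\sum_i f(\sigma_i(M+tN))\le\sum_i f(\sigma_i(M))+t\sum_i s_i\tilde N_{ii}+O(t^{2})=\sum_i f(\sigma_i(M))+t\,{\rm tr}(NM_\pi)+O(t^{2})$.

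The hard part will be the interaction of degeneracies with the possible non-smoothness of $f$. If $M$ has a repeated eigenvalue, the first-order correction to an individual eigenvalue is not $u_i^{T}Nu_i$ but an eigenvalue of a compressed block of $N$; what rescues the argument is that only the sum of these corrections over a block enters, and a sum of eigenvalues is a trace, hence independent of how the block is diagonalized. If $M$ has a zero eigenvalue, the associated singular values move away from $0$ by an amount of order $t$ rather than $O(t^{2})$, so controlling $f$ of them to first order is possible only because $f$ is well-behaved ($\bar{d}_f(0)<\infty$); this is the delicate case, and it also forces one to align the kernel eigenbasis and the sign convention for the zero eigenvalue in $M_\pi$ with the compression of $N$ to $\ker M$ (equivalently, one can first prove the theorem for invertible $M$ and recover the general case by perturbing $M$ and passing to the limit). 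Finally, the reason no differentiability of $f$ is needed anywhere is that (\ref{eq:supergrad-ineq}) is an exact inequality valid for all $x,y\ge 0$, so it still applies when a perturbed singular value lands precisely on a corner of $f$.
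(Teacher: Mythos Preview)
Your overall strategy matches the paper's: expand each $\sigma_i(M+tN)$ to first order in $t$, apply the supergradient inequality~(\ref{eq:supergrad-ineq}) term by term, and use that over a block of equal singular values only the \emph{trace} of the compressed perturbation enters. The paper obtains the singular-value expansion via the dilation $\Xi(M)$ and the Lewis--Sendov formula (Fact~\ref{fact:singval-perturb}); you instead follow the analytic eigenvalue branches of $M+tN$ and then take absolute values. On blocks with $\nu_j\neq 0$ the two routes are equivalent, and your trace identity $\sum_{i\in I_j}\mu_i=\mathrm{tr}(\tilde N_{jj})$ is precisely the paper's reduction $\mathrm{tr}\big((Q^j)^T\Xi(N)Q^j\big)=\mathrm{tr}\big((V^j)^T N U^j\big)$.

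The zero-eigenvalue block is where your write-up has a genuine gap. By definition $s_i=\mathrm{sgn}(\lambda_{\pi_i}(M))\,\bar d_f(\sigma_i(M))$ depends only on $M$ and $\pi$, so the contribution of the kernel to $\mathrm{tr}(NM_\pi)$ is fixed once $\pi$ is; you are not free to ``read the sign attached to the zero eigenvalue off the restriction of $N$'', since doing so replaces $M_\pi$ by an $N$-dependent matrix and you would then be proving a different inequality from the one stated. Your fallback---prove the invertible case and approximate---is also not straightforward: for $M_\varepsilon$ with kernel eigenvalues shifted to $\pm\varepsilon$, the branch $|\pm\varepsilon+t\mu_i|$ changes sign at $t$ of order $\varepsilon/|\mu_i|$, so the constant hidden in your $O(t^2)$ blows up as $\varepsilon\downarrow 0$, and moreover $(M_\varepsilon)_\pi$ need not converge to the prescribed $M_\pi$. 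The paper avoids the absolute-value step altogether by working on $\Xi(M)$ from the outset, so that on the zero block the first-order correction to $\sigma_i$ is already an eigenvalue of a compression of $\Xi(N)$ rather than $|\mu_i|$; to close the gap in your route you would need to argue directly with Fact~\ref{fact:singval-perturb} on the kernel rather than via the bound $|t\mu_i+O(t^2)|\le t|\mu_i|+O(t^2)$.
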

The proof of Theorem \ref{thm:schatten-local} relies on the following fact concerning the singular values of a perturbed symmetric matrix:
\begin{fact} \label{fact:singval-perturb} (cf.~\cite[Section 5.1]{LS05})
Let $M,N \in \mathcal{S}^n$ be given.  Let $i_0,i_1,\ldots,i_l,i_{l+1} \in \{1,\ldots,n+1\}$ be such that $1 = i_0 < i_1<\cdots<i_l<i_{l+1}=n+1$, and for $j=1,\ldots,l$,
\begin{eqnarray}
\sigma_{i_{j-1}}(M) = \cdots = \sigma_{i_j-1}(M) &>& \sigma_{i_j}(M) = \cdots = \sigma_{i_{j+1}-1}(M). \label{eq:singval-not}
\end{eqnarray}
Then, for any $t>0$ and $i \in \{i_j,\ldots,i_{j+1}-1\}$, we have
\begin{equation} \label{eq:sv-perturb}
\sigma_i(M+tN) = \sigma_i(M) + t \cdot \lambda_{i-i_j+1}\left( (Q^j)^T \Xi(N) Q^j \right) + O(t^2),
\end{equation}
where $Q^j$ is a $2n \times (i_{j+1}-i_j)$ matrix whose columns are the eigenvectors associated with the $i_j$--th to the $(i_{j+1}-1)$--st eigenvalue of $\Xi(M)$, for $j=0,1,\ldots,l$.
\end{fact}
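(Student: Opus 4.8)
The plan is to reduce the claim to first--order eigenvalue perturbation theory for symmetric matrices via the dilation $\Xi$ introduced in Step~1. By Fact~\ref{fact:sym} (applied with $m=n$), the top $n$ eigenvalues of $\Xi(Z)$ are exactly the singular values of $Z$, i.e., $\sigma_i(Z)=\lambda_i(\Xi(Z))$ for $i=1,\dots,n$, while $\lambda_{2n+1-i}(\Xi(Z))=-\sigma_i(Z)$. Since $\Xi$ is linear, $\Xi(M+tN)=\Xi(M)+t\,\Xi(N)$, so $t\mapsto\Xi(M)+t\,\Xi(N)\in\mathcal{S}^{2n}$ is an affine (hence real--analytic) family, and it suffices to expand $\lambda_i(\Xi(M)+t\,\Xi(N))$ to first order in $t$ for $i=1,\dots,n$.

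The analytic input is the classical perturbation formula for a degenerate eigenvalue of a symmetric matrix (see, e.g., \cite{SS90}): if $\mu$ is an eigenvalue of $\Xi(M)$ of multiplicity $r$ whose eigenspace is spanned by the columns of a $2n\times r$ matrix $Q$ with orthonormal columns, then the $r$ eigenvalues of $\Xi(M)+t\,\Xi(N)$ that converge to $\mu$ as $t\searrow0$ are, after sorting, $\mu+t\,\lambda_k\big(Q^T\Xi(N)Q\big)+O(t^2)$ for $k=1,\dots,r$ (one also checks that, to leading order in $t$, the sorted order of these perturbed eigenvalues agrees with that of $\lambda_1(Q^T\Xi(N)Q)\ge\cdots\ge\lambda_r(Q^T\Xi(N)Q)$, any discrepancy being absorbed into the $O(t^2)$ term). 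Now fix a block $j$ with $s_j:=\sigma_{i_j}(M)>0$. Since the singular values are sorted and $s_j>0$, the value $s_j$ is an \emph{isolated} eigenvalue of $\Xi(M)$ of multiplicity exactly $i_{j+1}-i_j$ --- it is separated from $\sigma_{i_j-1}(M)$ and $\sigma_{i_{j+1}}(M)$ (whenever these exist) and from $-s_j$ --- with eigenspace spanned by the columns of $Q^j$. For $t$ small enough the $i_{j+1}-i_j$ perturbed eigenvalues emanating from $s_j$ remain strictly positive, so by Fact~\ref{fact:sym} they are precisely $\sigma_{i_j}(M+tN)\ge\cdots\ge\sigma_{i_{j+1}-1}(M+tN)$; applying the formula with $k=i-i_j+1$ gives (\ref{eq:sv-perturb}) for every $i\in\{i_j,\dots,i_{j+1}-1\}$.

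The remaining case --- the block of \emph{vanishing} singular values, which occurs only when $M$ is singular --- is where I expect the genuine difficulty to lie. Here $s_l=0$, and since $\ker\Xi(M)=\ker M\times\ker M$, the eigenvalue $0$ of $\Xi(M)$ has multiplicity $2(i_{l+1}-i_l)$, twice the block length, so only half of the perturbed near--zero eigenvalues of $\Xi(M+tN)$ are singular values, and $Q^l$ spans only a proper subspace of the full zero--eigenspace. To resolve this I would exploit that, for \emph{every} $t$, the matrix $J:=\mbox{BlkDiag}(I_n,-I_n)$ conjugates $\Xi(M+tN)$ to $-\Xi(M+tN)$, so its spectrum is symmetric about $0$; hence its near--zero eigenvalues split into exact $\pm$ pairs, whose nonnegative members are precisely $\sigma_{i_l}(M+tN)\ge\cdots\ge\sigma_n(M+tN)$. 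The same $J$ fixes $\ker\Xi(M)$ and negates the compression of $\Xi(N)$ to that subspace, so that compression also has $\pm$--symmetric spectrum; taking $Q^l$ to span the eigenvectors attached to the top $i_{l+1}-i_l$ of the perturbed near--zero eigenvalues --- equivalently, the ``nonnegative half'' of the compression --- makes $\lambda_k\big((Q^l)^T\Xi(N)Q^l\big)$ equal the $k$--th largest eigenvalue of the full compression for $k\le i_{l+1}-i_l$, and the degenerate perturbation formula again yields (\ref{eq:sv-perturb}). The delicate point is to carry out this $\pm$--pairing bookkeeping cleanly and to check that the resulting $Q^l$ is consistent with ``the eigenvectors associated with the $i_l$--th to $(i_{l+1}-1)$--st eigenvalue of $\Xi(M)$''; for the fully rigorous version I would follow \cite[Section~5.1]{LS05}.
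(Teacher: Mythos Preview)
The paper does not prove Fact~\ref{fact:singval-perturb}; it is stated as a known result with a citation to \cite[Section~5.1]{LS05} and used as a black box in the proof of Theorem~\ref{thm:schatten-local}. So there is no ``paper's own proof'' to compare your proposal against.

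That said, your sketch is the natural approach and is essentially correct. The reduction $\sigma_i(Z)=\lambda_i(\Xi(Z))$ for $i\le n$ via Fact~\ref{fact:sym} together with the standard first--order perturbation formula for a repeated eigenvalue of the analytic symmetric family $t\mapsto\Xi(M)+t\,\Xi(N)$ handles every block with $s_j>0$ cleanly, exactly as you describe.

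Your identification of the genuine subtlety in the zero block is accurate, and your resolution via the involution $J=\mbox{BlkDiag}(I_n,-I_n)$ is correct. Two small sharpenings are worth making explicit. First, rather than describing $Q^l$ as ``the eigenvectors attached to the top perturbed near--zero eigenvalues'' (which sounds $t$--dependent), it is cleaner to say: choose an orthonormal basis $P$ of $\ker\Xi(M)$, diagonalize the compression $P^T\Xi(N)P$, and let $Q^l$ be $P$ times the eigenvectors of that compression corresponding to its $i_{l+1}-i_l$ largest eigenvalues (which, by the $J$--symmetry you noted, are exactly the nonnegative ones). Then $(Q^l)^T\Xi(N)Q^l$ is diagonal with precisely those top eigenvalues, and the degenerate perturbation formula gives (\ref{eq:sv-perturb}). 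Second, this makes transparent that in the singular case the admissible $Q^l$ depends on $N$ as well as $M$; this is consistent with the statement, which only asks that the columns of $Q^l$ be \emph{some} orthonormal eigenvectors of $\Xi(M)$ associated with the $i_l$--th through $n$--th eigenvalue, and any orthonormal set inside $\ker\Xi(M)$ qualifies. Since the paper's downstream use of Fact~\ref{fact:singval-perturb} (in the proof of Theorem~\ref{thm:schatten-local}) only needs the summed inequality and ultimately passes to a trace, this dependence causes no trouble there.
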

{\bf Proof of Theorem \ref{thm:schatten-local}}\,\,\, Using~(\ref{eq:supergrad-ineq}) and~(\ref{eq:sv-perturb}), for $i \in \{i_j,\ldots,i_{j+1}-1\}$ and $j=0,1,\ldots,l$, we have
$$
f(\sigma_i(M+tN)) \le f(\sigma_i(M)) + t \cdot \bar{d}_f(\sigma_i(M)) \cdot \lambda_{i-i_j+1}\left( (Q^j)^T \Xi(N) Q^j \right) + O(t^2).
$$
Hence, 
\begin{eqnarray*}
\sum_{i=1}^n f(\sigma_i(M+tN)) &\le& \sum_{i=1}^n f(\sigma_i(M)) + t \sum_{j=0}^l \sum_{i=i_j}^{i_{j+1}-1} \bar{d}_f(\sigma_i(M)) \cdot \lambda_{i-i_j+1}\left( (Q^j)^T \Xi(N) Q^j \right) + O(t^2) \\
\noalign{\medskip}
&=& \sum_{i=1}^n f(\sigma_i(M)) + t \sum_{j=0}^l \bar{d}_f(\sigma_{i_j}(M)) \cdot \mbox{tr}\left( (Q^j)^T \Xi(N) Q^j \right) + O(t^2),
\end{eqnarray*}
where the last equality follows from (\ref{eq:singval-not}).  Now, fix a spectrum--sorting permutation $\pi$ of $M$.  Let $M=U\Sigma V^T$ be the singular value decomposition of $M$, where $\Sigma=\mbox{Diag}(\sigma_1(M),\ldots,\sigma_n(M)) \in \mathcal{S}^n$.  Here, we take $u_i$ to be the eigenvector corresponding to the eigenvalue $\lambda_{\pi_i}(M)$ and $v_i = \mbox{sgn}(\lambda_{\pi_i}(M))u_i$, where $u_i$ (resp.~$v_i$) is the $i$--th column of $U$ (resp.~$V$), for $i=1,\ldots,n$.  Then, by Fact~\ref{fact:sym}, the matrix $Q^j$ can be put into the form
$$ Q^j = \frac{1}{\sqrt{2}}\left[ \begin{array}{c} U^j \\ V^j \end{array} \right], $$
where $U^j$ (resp.~$V^j$) is the $n \times (i_{j+1}-i_j)$ matrix formed by the $i_j$--th to the $(i_{j+1}-1)$--st column of $U$ (resp.~$V$).  Upon letting
$$ D(M) = \mbox{BlkDiag}\left( \bar{d}_f(\sigma_{i_0}(M))I_{i_1-i_0},\ldots,\bar{d}_f(\sigma_{i_l}(M))I_{i_{l+1}-i_l} \right) \in \R^{n \times n} $$
and noting, because of~(\ref{eq:singval-not}), that $D(M) = \mbox{Diag}(\bar{d}_f(\sigma_1(Z)),\ldots,\bar{d}_f(\sigma_n(Z)))$, we compute
\begin{eqnarray*}
\sum_{j=0}^l \bar{d}_f(\sigma_{i_j}(M)) \cdot \mbox{tr}\left( (Q^j)^T \Xi(N) Q^j \right) &=& \sum_{j=0}^l \bar{d}_f(\sigma_{i_j}(M)) \cdot \mbox{tr} \left( (V^j)^TN(U^j) \right) \\
\noalign{\medskip}
&=& \mbox{tr}\left( NUD(M)V^T \right) \\
\noalign{\medskip}
&=& \mbox{tr}(NM_\pi).
\end{eqnarray*}
This completes the proof. \endproof

\medskip
\noindent{\it Step 3: Lower Bounding the Right--Hand Side of (\ref{eq:cav-perturb}) when $f$ is Well--Behaved}

\smallskip
Let $A,B \in \mathcal{S}^n$ be given, and let $A = U_A \Sigma_A U_A^T$ and $B = U_B \Sigma_B U_B^T$ be the spectral decompositions of $A$ and $B$, respectively.  Set $V = U_A^TU_B \in \mathcal{O}^n$.  We claim that there exists a $Q_0 \in \mathcal{O}^n$ such that
$$ Q_0 = \arg\min_{Q \in \mathcal{O}^n} \sum_{i=1}^n f\left( \sigma_i\left( \Sigma_A - Q\Sigma_BQ^T \right) \right). $$
This follows from the compactness of $\mathcal{O}^n$ and the following result:
\begin{prop}
For each $i \in \{1,\ldots,n\}$, the function $f(\sigma_i(\cdot))$ is continuous on $\mathcal{S}^n$.
\end{prop}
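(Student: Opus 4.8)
The plan is to prove the stronger statement that, for each $i \in \{1,\ldots,n\}$, the map $M \mapsto f(\sigma_i(M))$ is \emph{Lipschitz} continuous on $\mathcal{S}^n$, by exhibiting it as a composition of two Lipschitz maps. (Continuity alone is what is needed here---so that, together with compactness of $\mathcal{O}^n$, the minimizer $Q_0$ is attained---but the quantitative version is no harder.)

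\emph{Step 1: $\sigma_i(\cdot)$ is $1$--Lipschitz on $\mathcal{S}^n$.} This is immediate from Mirsky's inequality~(\ref{eq:mir-ineq}): taking $k=1$ and $i_1=i$ there gives $\big|\sigma_i(M)-\sigma_i(N)\big| \le \sigma_1(M-N) = \|M-N\|$ for all $M,N \in \mathcal{S}^n$ (indeed for all matrices).

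\emph{Step 2: $f$ is Lipschitz on $\R_+$, with constant $L := \bar{d}_f(0) < +\infty$.} Finiteness of $L$ is precisely the well--behavedness hypothesis under which we operate at this stage. The one auxiliary fact needed is that $f$ is non--decreasing: since $f$ is concave and $f \ge 0$, a strictly negative slope on some subinterval would, by concavity, force every slope further to the right to be no larger, hence $f(x)\to -\infty$ as $x\to\infty$, contradicting $f\ge0$; in particular $\bar{d}_f(x)\ge 0$ for every $x\ge0$. Now fix $0\le x\le y$. By the supergradient inequality~(\ref{eq:supergrad-ineq}) together with the monotonicity of $\bar{d}_f$ established above, $f(y)-f(x) \le \bar{d}_f(x)(y-x) \le L(y-x)$, while monotonicity of $f$ gives $f(y)-f(x)\ge 0$. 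Hence $|f(x)-f(y)|\le L|x-y|$ for all $x,y\ge0$. Composing with Step~1, for any $M,N\in\mathcal{S}^n$ we get $\big|f(\sigma_i(M))-f(\sigma_i(N))\big| \le L\,\big|\sigma_i(M)-\sigma_i(N)\big| \le L\|M-N\|$, so $f(\sigma_i(\cdot))$ is $L$--Lipschitz, and in particular continuous on $\mathcal{S}^n$.

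I expect the only delicate point to be the left endpoint $x=0$: the right--derivative $\bar{d}_f(0)$ could a priori be $+\infty$, which would invalidate the estimate of Step~2 (and, for an $f$ with a jump at $0$, even destroy continuity of $M\mapsto f(\sigma_i(M))$ at matrices with $\sigma_i(M)=0$). This is exactly what the well--behavedness assumption rules out, which is why the proposition---and hence the existence of $Q_0$---is invoked only after the reduction to well--behaved $f$. Away from $0$ the claim is automatic, since a finite concave function is continuous (indeed locally Lipschitz) on the interior of its domain; everything else is routine.
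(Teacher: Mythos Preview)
Your proof is correct and follows essentially the same approach as the paper: both use the $1$--Lipschitz continuity of $\sigma_i(\cdot)$ together with the supergradient inequality~(\ref{eq:supergrad-ineq}) and the well--behavedness bound $\bar{d}_f(0)<+\infty$ to control $f$ near~$0$. Your version is slightly cleaner in that you prove a global Lipschitz bound for $f$ on $\R_+$ in one stroke, whereas the paper splits into the cases $\sigma_i(Z)>0$ (handled by interior continuity of concave functions) and $\sigma_i(Z)=0$ (handled via~(\ref{eq:supergrad-ineq}) at $x=0$); the underlying ingredients are identical.
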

\begin{proof}
Let $i \in \{1,\ldots,n\}$ be fixed.  By~\cite[Chapter IV, Theorem 4.11]{SS90}, $\sigma_i(\cdot)$ is 1--Lipschitz continuous.  Moreover, since $f(\cdot)$ is concave on $\R_+$, it is continuous on $\R_{++}$~\cite[Lemma 2.70]{R06}.  Thus, $f(\sigma_i(\cdot))$ is continuous at all $Z \in \mathcal{S}^n$ satisfying $\sigma_i(Z)>0$.  Now, let $Z \in \mathcal{S}^n$ be such that $\sigma_i(Z)=0$.  Then, using (\ref{eq:supergrad-ineq}) and the fact that $f(0)=0$, we have
$$ \left| f(\sigma_i(Y)) - f(\sigma_i(Z)) \right| \le \left| \bar{d}_f(0) \right| \cdot |\sigma_i(Y)-\sigma_i(Z)| $$
for all $Y \in \mathcal{S}^n$.  This, together with the 1--Lipschitz continuity of $\sigma_i(\cdot)$, implies that $f(\sigma_i(\cdot))$ is continuous at all $Z \in \mathcal{S}^n$ satisfying $\sigma_i(Z)=0$ as well.
\end{proof}

\medskip
As a consequence of the claim, we have
$$
\sum_{i=1}^n f(\sigma_i(A-B)) = \sum_{i=1}^n f\left( \sigma_i \left( \Sigma_A - V\Sigma_BV^T \right) \right) \ge \sum_{i=1}^n f\left( \sigma_i\left( \Sigma_A - Q_0\Sigma_BQ_0^T \right) \right).
$$
We now prove the following result:
\begin{thm} \label{thm:min-comm}
Let $\bar{B} = Q_0\Sigma_B Q_0^T \in \mathcal{S}^n$ and $C=\Sigma_A-\bar{B} \in \mathcal{S}^n$.  Then, $\bar{B}$ and $C$ commute.
\end{thm}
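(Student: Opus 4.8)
The plan is to read the commutativity off the first--order optimality of $Q_0$, using Theorem~\ref{thm:schatten-local} to compute the first--order change of the objective $g(Q) := \sum_{i=1}^n f(\sigma_i(\Sigma_A - Q\Sigma_B Q^T))$ along orthogonal perturbations. Fix an arbitrary real skew--symmetric matrix $K \in \R^{n\times n}$ and consider the feasible curve $Q(t) = \exp(tK)\,Q_0 \in \mathcal{O}^n$, $t>0$. Expanding the matrix exponential gives $Q(t)\Sigma_B Q(t)^T = \exp(tK)\,\bar B\,\exp(-tK) = \bar B + t\,[K,\bar B] + O(t^2)$, where $[K,\bar B] = K\bar B - \bar B K$, so that $\Sigma_A - Q(t)\Sigma_B Q(t)^T = C - t\,[K,\bar B] + O(t^2)$. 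Since $f$ is well--behaved, each map $Z\mapsto f(\sigma_i(Z))$ is Lipschitz on $\mathcal{S}^n$ (the $1$--Lipschitz continuity of $\sigma_i(\cdot)$ combined with $0\le f(s)-f(t)\le \bar d_f(0)(s-t)$ for $0\le t\le s$, which follows from (\ref{eq:supergrad-ineq})), so the $O(t^2)$ perturbation above is harmless, and Theorem~\ref{thm:schatten-local} applied with $M=C$, $N = -[K,\bar B]$, and any spectrum--sorting permutation $\pi$ of $C$ yields
$$ g(Q(t)) \;\le\; \sum_{i=1}^n f(\sigma_i(C)) \;-\; t\cdot\mbox{tr}\!\big([K,\bar B]\,C_\pi\big) \;+\; O(t^2) \;=\; g(Q_0) \;-\; t\cdot\mbox{tr}\!\big([K,\bar B]\,C_\pi\big) \;+\; O(t^2). $$

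Because $Q_0$ is a global minimizer we have $g(Q(t)) \ge g(Q_0)$; dividing the resulting inequality by $t>0$ and letting $t\searrow 0$ gives $\mbox{tr}\big([K,\bar B]\,C_\pi\big)\le 0$, and running the same argument with $-K$ in place of $K$ gives the opposite inequality. Hence $\mbox{tr}\big([K,\bar B]\,C_\pi\big)=0$ for every real skew--symmetric $K$. By cyclicity of the trace this says $\mbox{tr}\big(K\,[\bar B,C_\pi]\big)=0$ for all such $K$; since $[\bar B,C_\pi]$ is the commutator of two symmetric matrices it is itself skew--symmetric, so choosing $K=[\bar B,C_\pi]$ and using $\mbox{tr}(W^2)=-\|W\|_F^2$ for skew--symmetric $W$ forces $[\bar B,C_\pi]=\bz$. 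Thus $\bar B$ commutes with $C_\pi$.

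It remains to upgrade ``$\bar B$ commutes with $C_\pi$'' to ``$\bar B$ commutes with $C$''. Here one uses the explicit description of $C_\pi$ from Step~2: $C_\pi$ is obtained from $C$ by applying the scalar function $\mu\mapsto\mbox{sgn}(\mu)\,\bar d_f(|\mu|)$ in the functional calculus, so $C_\pi$ and $C$ are simultaneously diagonalizable and every eigenspace of $C_\pi$ is invariant under both $\bar B$ and $C$. When $f$ is strictly concave and strictly increasing this scalar function is injective, its level sets are singletons, and the eigenspaces of $C_\pi$ coincide with those of $C$; then $[\bar B,C_\pi]=\bz$ immediately gives $[\bar B,C]=\bz$. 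The general well--behaved case follows by approximating $f$ by functions in this subclass (the same limiting device used later in the proof) or, alternatively, by restricting the minimization problem to each eigenspace of $C_\pi$ and recursing on the blocks. I expect this last passage---together with making the first--order expansion above fully rigorous at points where some $\sigma_i(C)$ vanishes or is repeated, which is precisely what Fact~\ref{fact:singval-perturb} and the well--behavedness hypothesis are designed to control---to be the main technical hurdle; the core of the argument is the clean identity $\mbox{tr}\big([K,\bar B]\,C_\pi\big)=0$ coming from first--order optimality.
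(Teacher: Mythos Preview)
Your approach is essentially the paper's: both exploit first--order optimality of $Q_0$ along orthogonal curves $t\mapsto\exp(tK)Q_0$, invoke Theorem~\ref{thm:schatten-local} to extract the linear--in--$t$ coefficient, and conclude $[\bar B,C_\pi]=\bz$. The only stylistic difference is that the paper does not range over all skew $K$ and then specialize; it sets $K=D:=C_\pi\bar B-\bar B C_\pi$ from the outset, so that the first--order coefficient is $\mbox{tr}\big((\bar BD-D\bar B)C_\pi\big)=\mbox{tr}(D^2)=-\|D\|_F^2$, strictly negative unless $D=\bz$ --- this contradicts minimality in one stroke, without the $\pm K$ device.

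On the passage from $[\bar B,C_\pi]=\bz$ to $[\bar B,C]=\bz$: the paper dispatches it in one line at the start of the proof, observing that $C_\pi$ shares the eigenvectors of $C$ and asserting the two commutativity statements are equivalent. Your injectivity remark for strictly concave, strictly increasing $f$ is on target, but your proposed approximation fix does not work as stated: the minimizer $Q_0$ --- and hence $\bar B$ and $C$ themselves --- depend on $f$, so replacing $f$ by a nearby strictly concave function changes the very matrices whose commutativity you are trying to establish. This is \emph{not} the limiting device of Step~5, which approximates $f$ only after inequality~(\ref{eq:cav-perturb}) has already been proved for well--behaved $f$, with $A$ and $B$ held fixed throughout. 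Your alternative suggestion of recursing on the eigenspaces of $C_\pi$ is closer in spirit to a genuine fix, but is left as a sketch.
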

\begin{proof}
Since $\bar{B},C \in \mathcal{S}^n$, we have $\bar{B}$ and $C$ commute iff they are simultaneously diagonalizable.  Moreover, for any spectrum--sorting permutation $\pi$ of $C$, $C_\pi$ has the same set of eigenvectors as $C$.  Thus, $\bar{B}$ and $C$ commute iff $\bar{B}$ and $C_\pi$ commute.  Suppose then that $\bar{B}$ and $C_\pi$ do not commute for some spectrum--sorting permutation $\pi$ of $C$.  Set $D = C_\pi\bar{B} - \bar{B}C_\pi \not=\bz$.  It is easy to verify that $D$ is skew--symmetric, i.e., $D=-D^T$.  Hence, we have $V(t) = \exp(t D) \in \mathcal{O}^n$ for all $t \in \R$.  Since $f$ is well--behaved, we compute
\begin{eqnarray}
\sum_{i=1}^n f\left( \sigma_i\left( \Sigma_A - V(t)\bar{B}V(t)^T \right) \right) &=& \sum_{i=1}^n f\left( \sigma_i\left( \Sigma_A - (I+t D)\bar{B}(I-t D) + O(t^2) \right) \right) \nonumber \\
\noalign{\medskip}
&\le& \sum_{i=1}^n f\left( \sigma_i\left( \Sigma_A - \bar{B} + t(\bar{B}D - D\bar{B}) \right)\right) \nonumber \\
\noalign{\medskip}
&\quad+& \sum_{i=1}^n \left[ \bar{d}_f\left(  \sigma_i\left( \Sigma_A - \bar{B} + t(\bar{B}D - D\bar{B}) \right) \right) \cdot O(t^2) \right] \label{eq:cave-bd} \\
\noalign{\medskip}
&\le& \sum_{i=1}^n f( \sigma_i(C) ) + t\cdot\mbox{tr}\left( (\bar{B}D - D\bar{B})C_\pi \right) + O(t^2), \label{eq:minimizer} 
\end{eqnarray}
where (\ref{eq:cave-bd}) follows from~(\ref{eq:supergrad-ineq}) and the 1--Lipschitz continuity of $\sigma_i(\cdot)$ for $i=1,\ldots,n$, while~(\ref{eq:minimizer}) follows from Theorem~\ref{thm:schatten-local} and the fact that
$$ \bar{d}_f\left(  \sigma_i\left( \Sigma_A - \bar{B} + t(\bar{B}D - D\bar{B}) \right) \right) \le \bar{d}_f(0) < +\infty $$
for all $t \in \R$ and $i \in \{1,\ldots,n\}$.  Using the identity $\mbox{tr}(XY^T)=\mbox{tr}(Y^TX)$, which is valid for arbitrary matrices of the same dimensions, we have
\begin{equation} \label{eq:first-order}
\mbox{tr}\left( (\bar{B}D-D\bar{B})C_\pi \right) = \mbox{tr}\left( -DD^T \right) = -\|D\|_F^2 < 0.
\end{equation}
It follows from (\ref{eq:minimizer}) and (\ref{eq:first-order}) that for sufficiently small $t>0$, 
\begin{equation} \label{eq:contradict}
\sum_{i=1}^n f\left( \sigma_i\left( \Sigma_A - V(t)\bar{B}V(t)^T \right) \right) < \sum_{i=1}^n f( \sigma_i(C) ),
\end{equation}
which contradicts the minimality of $Q_0$.  Hence, we have $D=\bz$, or equivalently, $\bar{B}$ and $C$ commute.
\end{proof}

\medskip
\noindent With the help of the following result, we can gain further insight into the structure of the minimizer $Q_0$.  We omit the proof as it is straightforward.
\begin{prop} \label{prop:diag-comm}
Let $X,Y \in \mathcal{S}^n$ be such that $X$ is diagonal with distinct diagonal entries and $X$ commutes with $Y$.  Then, $Y$ is also diagonal.
\end{prop}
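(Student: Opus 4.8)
The plan is to argue by an entrywise comparison in the commutation relation $XY = YX$. First I would write $X = \mbox{Diag}(x_1,\ldots,x_n)$, where by hypothesis the scalars $x_1,\ldots,x_n$ are pairwise distinct. Left multiplication by $X$ rescales the $i$--th row of $Y$ by $x_i$, while right multiplication by $X$ rescales the $j$--th column of $Y$ by $x_j$; hence for every pair $(i,j)$ the $(i,j)$ entry of the commutator $XY - YX$ equals $(x_i - x_j)\,Y_{ij}$. Since $X$ and $Y$ commute, this quantity vanishes for all $i,j$. When $i \neq j$, the distinctness assumption gives $x_i \neq x_j$, so we may divide out the nonzero factor $x_i - x_j$ and conclude $Y_{ij} = 0$. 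Therefore every off--diagonal entry of $Y$ vanishes, i.e., $Y$ is diagonal, which is the assertion.

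There is no genuine obstacle here; the single point worth emphasizing is that the distinctness of the diagonal entries of $X$ is used in an essential way, precisely to license the cancellation of $x_i - x_j$ --- without it the conclusion fails (for instance, $X = I$ commutes with every $Y \in \mathcal{S}^n$). In the flow of the paper, this proposition is the routine bookkeeping step that converts the commutativity output of Theorem~\ref{thm:min-comm} into usable structural information about the minimizer $Q_0$ (namely, that $\bar{B}$ must share the diagonalizing basis that puts $\Sigma_A$ into diagonal form), with the substantive analytic content having already been supplied by Theorems~\ref{thm:schatten-local} and~\ref{thm:min-comm}.
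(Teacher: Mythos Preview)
Your argument is correct and is exactly the standard entrywise computation one would expect; the paper in fact omits the proof entirely, remarking only that it is straightforward. Nothing further is needed.
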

By Theorem \ref{thm:min-comm} and the definition of $C$, we have $\Sigma_A\bar{B}=\bar{B}\Sigma_A$.  If in addition $A$ has distinct eigenvalues, then $\bar{B}$ is diagonal by Proposition \ref{prop:diag-comm}.  In particular, we can write $\bar{B} = \mbox{Diag}(\lambda_{\theta_1}(B),\ldots,\lambda_{\theta_n}(B))$ for some permutation $\theta=(\theta_1,\ldots,\theta_n)$ of $\{1,\ldots,n\}$.  Geometrically, this means that the minimizer $Q_0$ implicitly aligns the principal axes of $A$ and $B$.

\medskip
\noindent{\it Step 4: Proving the Perturbation Inequality~(\ref{eq:cav-perturb}) for Well--Behaved $f$}

\smallskip
For the case where $A \in \mathcal{S}^n$ has distinct eigenvalues, the discussion following Proposition \ref{prop:diag-comm}, together with~\cite[Proposition 1]{AK12}, immediately yields
\begin{eqnarray}
\sum_{i=1}^n f(\sigma_i(A-B)) &\ge& \sum_{i=1}^n f\left( \sigma_i \left( \Sigma_A - Q_0\Sigma_BQ_0^T \right) \right) \nonumber \\
\noalign{\medskip}
&=& \sum_{i=1}^n f\left( \sigma_i \left( \Sigma_A - \mbox{Diag}(\lambda_{\theta_1}(B),\ldots,\lambda_{\theta_n}(B)) \right)\right) \nonumber \\
\noalign{\medskip}
&\ge& \sum_{i=1}^n \big| f(\sigma_i(A)) - f(\sigma_i(B)) \big|. \label{eq:goal-1}
\end{eqnarray}
To handle the case where $A \in\mathcal{S}^n$ has repeated eigenvalues, consider a sequence $\{A^l\}_{l=1}^\infty$ of matrices in $\mathcal{S}^n$ with distinct eigenvalues such that $A^l \limto A$.  By~(\ref{eq:goal-1}), we have
$$ \sum_{i=1}^n f(\sigma_i(A^l-B)) \ge \sum_{i=1}^n \big| f(\sigma_i(A^l)) - f(\sigma_i(B)) \big| $$
for $l=1,2,\ldots$, which by continuity implies that~(\ref{eq:cav-perturb}) holds.

\medskip
\noindent{\it Step 5: Completing the Proof of the Perturbation Inequality~(\ref{eq:cav-perturb})}

\smallskip
To handle the case where $f$ is not well--behaved, we proceed as follows.  For each $\delta>0$, define $f_\delta:\R_+\limto\R_+$ by
$$ f_\delta(x) = \min\left\{ \frac{f(\delta)}{\delta}x,f(x) \right\}. $$
Note that $f_\delta$ is a concave function, as it is the pointwise minimum of two concave functions~\cite[Lemma 2.58]{R06}.  Moreover, since $f(0)=0$, we have $f_\delta(0)=0$.  Thus, $f_\delta$ satisfies the conditions in Theorem~\ref{thm:cav-perturb}.  Now, using the concavity of $f$, it can be shown that
$$ f_\delta(x) = \left\{
\begin{array}{c@{\quad}l}
f(x) & \mbox{for } x \ge \delta, \\
\noalign{\medskip}
\displaystyle{ \frac{f(\delta)}{\delta}x } & \mbox{for } 0 \le x \le \delta.
\end{array}
\right.
$$
In particular, $f_\delta$ is well--behaved.  Thus, by the result in Step~4, we have
$$ \sum_{i=1}^n f_\delta(\sigma_i(A-B)) \ge \sum_{i=1}^n \big| f_\delta(\sigma_i(A)) - f_\delta(\sigma_i(B)) \big| $$
for each $\delta>0$.  To complete the proof, we simply observe that $\lim_{\delta\searrow0}f_\delta(x)\limto f(x)$ for each $x\ge0$.

\section{Applications in Low--Rank Matrix Recovery} \label{sec:app}
As pointed out in~\cite{OMFH11}, one important consequence of the perturbation inequality~(\ref{eq:key-ineq}) is that it connects the sufficient conditions for the recovery of low--rank matrices by the Schatten $p$--quasi--norm heuristic to those for the recovery of sparse vectors by the $\ell_p$ heuristic.  For completeness' sake, let us briefly elaborate on the connection here.

For a given $p \in (0,1)$ and $k \ge 1$, let $\mathscr{S}_k^p$ be the set of $s \times t$ matrices (where $t \ge k$) such that whenever $A \in \mathscr{S}_k^p$, every vector $\bar{x} \in \R^t$ with $\|\bar{x}\|_0 = |\{i:\bar{x}_i\not=0\}| \le k$ and $y = A\bar{x} \in \R^s$ can be exactly recovered by solving the following optimization problem:
\begin{equation} \label{eq:lp}
\begin{array}{c@{\quad}l}
\mbox{minimize} & \|x\|_p^p \\
\noalign{\smallskip}
\mbox{subject to} & Ax = y.
\end{array}
\end{equation}
We have the following theorem:
\begin{thm} \label{thm:transfer}
(cf.~\cite[Theorem 1]{OMFH11}) Let $\mathcal{A}:\R^{m\times n} \limto \R^l$ be a given linear operator with $m \le n$.  Suppose that $\mathcal{A}$ possesses the following property:

\medskip
\noindent{\sc Property (E).} \emph{For any orthogonal $U \in \R^{m \times m}$ and $V \in \R^{n\times n}$, the matrix $A_{U,V} \in \R^{l \times m}$ induced by the linear map $x\mapsto \mathcal{A}\left( U \left[\begin{array}{cc} \mbox{Diag}(x) & \bz \end{array}\right] V^T \right)$ belongs to $\mathscr{S}_k^p$.}

\medskip
\noindent Then, every matrix $\bar{X} \in \R^{m\times n}$ with $\mbox{rank}(\bar{X}) \le k$ and $y = \mathcal{A}(\bar{X}) \in \R^l$ can be exactly recovered by solving Problem (\ref{eq:schatten}) with $\eta = 0$.
\end{thm}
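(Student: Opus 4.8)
The statement is essentially \cite[Theorem 1]{OMFH11}, which was proved there under the hypothesis that the perturbation inequality (\ref{eq:key-ineq}) holds; since Theorem~\ref{thm:cav-perturb} now establishes (\ref{eq:key-ineq}) unconditionally, that argument applies, and the plan is to recall it. First, note that ``$\bar X$ is exactly recovered'' means precisely that $\bar X$ is the unique optimal solution of Problem (\ref{eq:schatten}) with $\eta=0$. Fix $\bar X$ with $\mbox{rank}(\bar X) \le k$ and let $\bar X = U[\Sigma \ \bz]V^T$ be a singular value decomposition (recall $m \le n$), so that the vector $\bar x = (\sigma_1(\bar X),\ldots,\sigma_m(\bar X)) \in \R^m$ is $k$--sparse and $A_{U,V}\bar x = \mathcal{A}(\bar X) = y$. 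By Property~(E) we have $A_{U,V} \in \mathscr{S}_k^p$, so $\bar x$ is the unique optimal solution of Problem (\ref{eq:lp}) with $A = A_{U,V}$.

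The next step is to translate Property~(E) into a null--space property for $\mathcal{A}$ phrased in terms of singular values. Recall the standard fact that $A \in \mathscr{S}_k^p$ is equivalent to the $\ell_p$ null--space property, namely $\|h_S\|_p^p < \|h_{S^c}\|_p^p$ for every $h \in \ker A \setminus\{0\}$ and every $S$ with $|S| \le k$ (the ``if'' direction is the one--line estimate $\|\bar x + h\|_p^p = \|\bar x_S + h_S\|_p^p + \|h_{S^c}\|_p^p \ge \|\bar x\|_p^p - \|h_S\|_p^p + \|h_{S^c}\|_p^p$ valid when $\mbox{supp}(\bar x) \subseteq S$, using $|a+b|^p \ge |a|^p - |b|^p$; the ``only if'' direction is the routine contrapositive obtained from the feasible pair $h_S$ and $-h_{S^c}$). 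Now, given an arbitrary $Z \in \ker\mathcal{A}\setminus\{0\}$, take a singular value decomposition $Z = U[\mbox{Diag}(\sigma(Z))\ \bz]V^T$; then $\sigma(Z) \in \ker A_{U,V}$, and the $\ell_p$ null--space property of $A_{U,V}$ yields $\sum_{i\in S}\sigma_i^p(Z) < \sum_{i\notin S}\sigma_i^p(Z)$ for all $|S|\le k$ --- that is, a matrix null--space property of order $k$ for $\mathcal{A}$.

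Finally --- and this is the place where the perturbation inequality (\ref{eq:key-ineq}) is indispensable --- the plan is to show that this matrix null--space property forces $\|X\|_p^p > \|\bar X\|_p^p$ for every feasible $X \ne \bar X$. Writing $Z = X - \bar X \in \ker\mathcal{A}\setminus\{0\}$, the idea is to apply (\ref{eq:key-ineq}) --- and, for the sharp constant, its Lidskii--Wielandt--type partial--sum strengthening (\ref{eq:f-lw-ineq}) --- to the pair $(X,\bar X)$, exploiting crucially that $\sigma_i(\bar X) = 0$ for $i > k$, so as to separate the top--$k$ part of the spectrum of $X$ from its tail and bound $\|X\|_p^p - \|\bar X\|_p^p$ from below by $\sum_{i>k}\sigma_i^p(Z) - \sum_{i\le k}\sigma_i^p(Z)$; the matrix null--space property then makes this strictly positive, which gives uniqueness. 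The point of (\ref{eq:key-ineq}) here is that, unlike for the nuclear norm, no pinching inequality is available for the Schatten $p$--quasi--norm --- indeed $\|X\|_p^p$ can strictly \emph{exceed} the Schatten $p$--quasi--norm of its ``diagonal part'' when $p<1$ --- so (\ref{eq:key-ineq}) is the only tool that controls $\|X\|_p^p$ in terms of the singular values of $Z$; this is precisely why \cite{OMFH11} could establish the result only conditionally. I expect the main obstacle to be exactly this coupling: extracting a \emph{strict} gap from (\ref{eq:key-ineq}) while arranging the bookkeeping so that the matrix null--space property of order $k$ obtained in the previous step --- rather than one of a larger order --- is what is needed.
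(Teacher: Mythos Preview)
Your plan is correct and matches the paper's proof essentially step for step: the paper packages the argument as Fact~\ref{fact:vec-ns} (the vector null--space characterization of $\mathscr{S}_k^p$) plus Proposition~\ref{prop:mat-ns} (matrix null--space property $\Leftrightarrow$ unique Schatten--$p$ recovery, proved via~(\ref{eq:key-ineq})), and then the one--line deduction that any $Z\in\mathcal{N}(\mathcal{A})\setminus\{\bz\}$ has $\sigma(Z)\in\ker A_{U,V}$ for the $U,V$ coming from its SVD. Your three paragraphs reproduce exactly these three pieces.

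Two small points of care for when you write it out. First, in the final step you must apply~(\ref{eq:key-ineq}) with $\bar X$ and $-Z$ in the roles of $A$ and $B$, so that $A-B=X$ sits on the \emph{right}--hand side: this gives
\[
\|X\|_p^p \;\ge\; \sum_{i=1}^m\big|\sigma_i^p(\bar X)-\sigma_i^p(Z)\big|
\;\ge\; \sum_{i\le k}\sigma_i^p(\bar X)-\sum_{i\le k}\sigma_i^p(Z)+\sum_{i>k}\sigma_i^p(Z),
\]
using $\sigma_i(\bar X)=0$ for $i>k$. Applying~(\ref{eq:key-ineq}) literally ``to the pair $(X,\bar X)$'' (i.e.\ with $A=X$, $B=\bar X$) only yields $\sum_i|\sigma_i^p(X)-\sigma_i^p(\bar X)|\le\|Z\|_p^p$, which goes the wrong way. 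Second, the partial--sum inequality~(\ref{eq:f-lw-ineq}) is not needed here and does not by itself give the bound you want (for instance, the tail estimate $\sum_{i>k}\sigma_i^p(X)\ge\sum_{i>k}\sigma_i^p(Z)$ is false in general); the full--sum inequality~(\ref{eq:key-ineq}) is exactly the right tool, and it already delivers the sharp constant $1$ in front of the null--space gap.
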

The proof of Theorem \ref{thm:transfer} relies on the following two results, the latter of which is established using the perturbation inequality (\ref{eq:key-ineq}):
\begin{fact} \label{fact:vec-ns}
(cf.~\cite{GN03}) Let $A \in \R^{s \times t}$ be given.  Then, we have $A \in \mathscr{S}_k^p$ iff
$$ \sum_{i=1}^k |z_j^\downarrow|^p < \sum_{i=k+1}^t |z_j^\downarrow|^p \qquad\mbox{for all } z \in \mathcal{N}(A) \backslash \{\bz\}, $$
where $z^\downarrow \in \R^t$ is the vector whose $i$--th entry is the $i$--th largest (in absolute value) entry of $z$, and $\mathcal{N}(A) = \{z \in \R^t: Az = \bz\}$ is the nullspace of $A$.
\end{fact}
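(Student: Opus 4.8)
The plan is to recognize Fact~\ref{fact:vec-ns} as the null--space property (NSP) characterization of exact $\ell_p$ recovery and to prove the two implications separately. The sole analytic ingredient is the $p$--subadditivity of $t\mapsto|t|^p$ on $\R$, i.e., $|a+b|^p\le|a|^p+|b|^p$ for all $a,b\in\R$ and $p\in(0,1]$; this follows from $|a+b|^p\le(|a|+|b|)^p$ together with the fact that $x\mapsto x^p$ is concave on $\R_+$ with $0^p=0$ --- exactly the structural property of the function $f$ exploited throughout the paper. Throughout, for a vector $z\in\R^t$ and an index set $S\subseteq\{1,\ldots,t\}$ I write $z_S$ for the vector agreeing with $z$ on $S$ and vanishing off $S$, and I record that $\|z_S\|_p^p\le\sum_{i=1}^k|z_i^\downarrow|^p$ whenever $|S|\le k$ (with equality when $S$ indexes $k$ entries of largest magnitude), so that $\|z_{S^c}\|_p^p=\|z\|_p^p-\|z_S\|_p^p\ge\sum_{i=k+1}^t|z_i^\downarrow|^p$.

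For the sufficiency direction, I would fix a $k$--sparse target $\bar{x}$ with support $S$ (so $|S|\le k$), take any feasible $x$ with $Ax=A\bar{x}$ and $x\ne\bar{x}$, and set $z=x-\bar{x}\in\mathcal{N}(A)\backslash\{\bz\}$. Since $\bar{x}$ vanishes off $S$, one has $x_{S^c}=z_{S^c}$, hence $\|x\|_p^p=\|x_S\|_p^p+\|z_{S^c}\|_p^p$; and applying $p$--subadditivity coordinatewise to $\bar{x}_S=x_S-z_S$ gives $\|\bar{x}\|_p^p\le\|x_S\|_p^p+\|z_S\|_p^p$. Subtracting the two relations yields $\|x\|_p^p-\|\bar{x}\|_p^p\ge\|z_{S^c}\|_p^p-\|z_S\|_p^p\ge\sum_{i=k+1}^t|z_i^\downarrow|^p-\sum_{i=1}^k|z_i^\downarrow|^p>0$, the last inequality being the NSP hypothesis applied to $z$. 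Hence $\bar{x}$ is the unique minimizer of Problem~(\ref{eq:lp}) with $y=A\bar{x}$, i.e., $A\in\mathscr{S}_k^p$.

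For the necessity direction I would argue by contraposition. If the NSP fails, pick $z\in\mathcal{N}(A)\backslash\{\bz\}$ and an index set $S$ consisting of $k$ entries of $z$ of largest magnitude such that $\|z_S\|_p^p\ge\|z_{S^c}\|_p^p$, and set $\bar{x}=z_S$ and $x=\bar{x}-z=-z_{S^c}$. Then $\bar{x}$ is $k$--sparse; $Ax=A\bar{x}$, so $x$ is feasible for Problem~(\ref{eq:lp}) with $y=A\bar{x}$; and $x\ne\bar{x}$ because $z\ne\bz$ while $z_S$ and $z_{S^c}$ have disjoint supports. Since $\|x\|_p^p=\|z_{S^c}\|_p^p\le\|z_S\|_p^p=\|\bar{x}\|_p^p$, the vector $\bar{x}$ is not the unique optimal solution, so $A\notin\mathscr{S}_k^p$.

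The argument is essentially routine once the right objects are in place; the points requiring care are (i) using $p$--subadditivity correctly, which is valid precisely for $p\in(0,1]$ and rests on the same concavity--plus--$f(0)=0$ structure as Theorem~\ref{thm:cav-perturb}, and (ii) the exact reading of ``exactly recovered'': in the necessity step one only obtains $\|x\|_p^p\le\|\bar{x}\|_p^p$, possibly with equality, but since $x\ne\bar{x}$ this already defeats uniqueness and hence recovery regardless of strictness. The tie--breaking in the choice of the top--$k$ index set $S$ is immaterial, as any set realizing $\sum_{i=1}^k|z_i^\downarrow|^p$ works.
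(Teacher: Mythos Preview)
Your proof is correct and is the standard null--space--property argument. Note, however, that the paper does not actually prove Fact~\ref{fact:vec-ns}: it is stated as a known result with a citation to~\cite{GN03} and used as a black box. There is therefore no ``paper's own proof'' to compare against. That said, your argument exactly parallels the paper's proof of the matrix analogue, Proposition~\ref{prop:mat-ns}: sufficiency via subadditivity (there, via the perturbation inequality~(\ref{eq:key-ineq})) applied to $z=x-\bar{x}$, and necessity via the explicit splitting $\bar{x}=z_S$, $x=-z_{S^c}$ of a violating null--space element. Your handling of the edge cases (non--uniqueness suffices even under equality; $\bar{x}=z_S\ne\bz$ since $z\ne\bz$ forces its largest--magnitude entry into $S$) is sound.
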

\begin{prop} \label{prop:mat-ns}
Let $\mathcal{A}:\R^{m\times n} \limto \R^l$ be a given linear operator with $m\le n$.  Then, every matrix $\bar{X} \in \R^{m\times n}$ with $\mbox{rank}(\bar{X}) \le k$ and $y = \mathcal{A}(\bar{X}) \in \R^l$ can be exactly recovered by solving Problem~(\ref{eq:schatten}) with $\eta = 0$ iff 
\begin{equation} \label{eq:mat-ns}
\sum_{i=1}^k \sigma_i^p(Z) < \sum_{i=k+1}^m \sigma_i^p(Z)
\end{equation}
holds for all $Z \in \mathcal{N}(\mathcal{A}) \backslash \{\bz\}$.
\end{prop}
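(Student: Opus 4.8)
\medskip
\noindent\textbf{Proof plan.}\quad I would prove the two implications separately; only the ``if'' direction needs the perturbation inequality, so I would dispose of ``only if'' first, by contraposition. Suppose some $Z\in\mathcal{N}(\mathcal{A})\setminus\{\bz\}$ fails~(\ref{eq:mat-ns}), so that $\sum_{i=1}^k\sigma_i^p(Z)\ge\sum_{i=k+1}^m\sigma_i^p(Z)$. Take a singular value decomposition $Z=\sum_{i=1}^m\sigma_i(Z)\,u_iv_i^T$ and set $\bar{X}=\sum_{i=1}^k\sigma_i(Z)\,u_iv_i^T$, so that $\mbox{rank}(\bar{X})\le k$, $\|\bar{X}\|_p^p=\sum_{i=1}^k\sigma_i^p(Z)$, $\|\bar{X}-Z\|_p^p=\sum_{i=k+1}^m\sigma_i^p(Z)$, and $\mathcal{A}(\bar{X}-Z)=\mathcal{A}(\bar{X})$. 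Then $\bar{X}$ and $\bar{X}-Z$ are both feasible for Problem~(\ref{eq:schatten}) with $y=\mathcal{A}(\bar{X})$ and $\eta=0$, and $\|\bar{X}-Z\|_p^p\le\|\bar{X}\|_p^p$ with $\bar{X}-Z\neq\bar{X}$, so $\bar{X}$ is not the unique minimizer---contradicting exact recovery of the rank--$\le k$ matrix $\bar{X}$. (No perturbation inequality is used here.)

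For ``if'', I would reduce everything to the following matrix counterpart of the ``support--splitting'' inequality behind Fact~\ref{fact:vec-ns}:
\begin{equation}\label{eq:prop-key}
\|\bar{X}+Z\|_p^p \ \ge\ \|\bar{X}\|_p^p \ -\ \sum_{i=1}^k\sigma_i^p(Z) \ +\ \sum_{i=k+1}^m\sigma_i^p(Z)\qquad\mbox{whenever }\mbox{rank}(\bar{X})\le k .
\end{equation}
Granting~(\ref{eq:prop-key}), the proof finishes at once: assuming~(\ref{eq:mat-ns}), let $\bar{X}$ have rank at most $k$, put $y=\mathcal{A}(\bar{X})$, and let $X\neq\bar{X}$ be feasible for Problem~(\ref{eq:schatten}) with $\eta=0$; then $Z:=X-\bar{X}\in\mathcal{N}(\mathcal{A})\setminus\{\bz\}$, and combining~(\ref{eq:prop-key}) with~(\ref{eq:mat-ns}) applied to this $Z$ gives $\|X\|_p^p\ge\|\bar{X}\|_p^p+\big(\sum_{i=k+1}^m\sigma_i^p(Z)-\sum_{i=1}^k\sigma_i^p(Z)\big)>\|\bar{X}\|_p^p$, so $\bar{X}$ is the unique minimizer.

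The crux is therefore~(\ref{eq:prop-key}), and this is where I would invoke the perturbation inequality~(\ref{eq:key-ineq}) (equivalently, Theorem~\ref{thm:cav-perturb} with $f=(\cdot)^p$). In the vector case one splits $\bar{x}+z$ over the support $S$ of $\bar{x}$, writing $\|\bar{x}+z\|_p^p=\|\bar{x}_S+z_S\|_p^p+\|z_{S^c}\|_p^p\ge\|\bar{x}\|_p^p-\|z_S\|_p^p+\|z_{S^c}\|_p^p$; a matrix has no such support, and~(\ref{eq:key-ineq}) is the substitute. Concretely I would first use the rotational invariance of the singular values to assume $\bar{X}=\big[\,\mbox{Diag}(\sigma_1(\bar{X}),\dots,\sigma_m(\bar{X}))\ \bz\,\big]$, which is supported on at most $k$ diagonal entries, and then apply~(\ref{eq:key-ineq}) to a pair of matrices whose difference has rank at most $k$---built from $\bar{X}$ and the leading singular components of $Z$---so as to transfer the ``first $k$ indices versus the rest'' bookkeeping of the vector argument onto $\sigma_1(\bar{X}+Z),\dots,\sigma_m(\bar{X}+Z)$; the one--sided Lidskii--Wielandt--type bound~(\ref{eq:f-lw-ineq}) can be used to control the leading $k$ of these, leaving~(\ref{eq:key-ineq}) to handle the remaining ones without loss. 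I expect this last step to be the main obstacle: weaker perturbation results such as Mirsky's~(\ref{eq:mir-ineq}) or the Weyl inequalities only deliver a lossy split at index $2k$ and fail to retain the $\|\bar{X}\|_p^p$ term of~(\ref{eq:prop-key}), so it is the full $\ell_1$ strength of~(\ref{eq:key-ineq})---which holds for \emph{every} concave $f$ with $f(0)=0$---that seems essential. For $p=1$,~(\ref{eq:prop-key}) is the known nuclear--norm fact derivable from~(\ref{eq:mir-ineq})~\cite{OMFH11}; the genuinely new point for $p\in(0,1)$ is that the pinching inequality, which covers the $p=1$ case, fails for the Schatten $p$--quasi--norm, so~(\ref{eq:key-ineq}) cannot be bypassed.
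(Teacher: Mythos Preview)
Your overall architecture matches the paper's: the ``only if'' direction is handled by the same SVD construction (the paper takes $\bar X=-U[\Sigma_1^k(\bar Z)\ \bz]V^T$ and $\bar X'=U[\Sigma_{k+1}^m(\bar Z)\ \bz]V^T$, which is your construction up to a sign), and for ``if'' you correctly isolate the key inequality~(\ref{eq:prop-key}) and correctly observe that this is the step requiring the perturbation inequality.

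Where you diverge from the paper---and where your proposal has a genuine gap---is in the proof of~(\ref{eq:prop-key}). You describe a plan involving rotational invariance, a decomposition built from the leading singular components of $Z$, and a two--stage argument using~(\ref{eq:f-lw-ineq}) for the first $k$ singular values and~(\ref{eq:key-ineq}) for the rest. This plan is vague (which pair of matrices? which difference has rank $\le k$?), you yourself flag it as ``the main obstacle'', and in fact it is unnecessary. The paper obtains~(\ref{eq:prop-key}) by a \emph{single} application of~(\ref{eq:key-ineq}): take $A=\bar X$ and $B=-Z$, so that $A-B=\bar X+Z$, and read off
\[
\|\bar X+Z\|_p^p \;=\; \sum_{i=1}^m \sigma_i^p(\bar X+Z) \;\ge\; \sum_{i=1}^m \bigl|\sigma_i^p(\bar X)-\sigma_i^p(Z)\bigr|.
\]
Now split the right--hand sum at $k$: for $i>k$ one has $\sigma_i(\bar X)=0$ since $\mbox{rank}(\bar X)\le k$, giving $\sum_{i=k+1}^m\sigma_i^p(Z)$; for $i\le k$ use $|a-b|\ge a-b$ to get $\sum_{i=1}^k\sigma_i^p(\bar X)-\sum_{i=1}^k\sigma_i^p(Z)=\|\bar X\|_p^p-\sum_{i=1}^k\sigma_i^p(Z)$. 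This is~(\ref{eq:prop-key}) in two lines. No diagonalization of $\bar X$, no SVD of $Z$, no use of~(\ref{eq:f-lw-ineq}), and no ``main obstacle''. The point you are missing is that~(\ref{eq:key-ineq}) already delivers the index--by--index comparison $|\sigma_i^p(\bar X)-\sigma_i^p(Z)|$ globally, so the support--splitting bookkeeping you are trying to engineer is purely algebraic once the inequality is applied with $A-B$ equal to the \emph{sum} $\bar X+Z$ rather than to a rank--$k$ piece.
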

\begin{proof}
Suppose that (\ref{eq:mat-ns}) holds for all $Z \in \mathcal{N}(\mathcal{A}) \backslash \{\bz\}$.  Let $\bar{X},\bar{X}' \in \R^{m\times n}$ be such that $\mbox{rank}(\bar{X}) \le k$ and $\mathcal{A}(\bar{X}) = \mathcal{A}(\bar{X}') = y$.  Clearly, we have $\bar{Z} = \bar{X}'-\bar{X} \in \mathcal{N}(\mathcal{A})$.  If $\bar{Z} \not= \bz$, or equivalently, if $\bar{X}'\not=\bar{X}$, then by taking $f(\cdot) = (\cdot)^p$ in Theorem \ref{thm:cav-perturb} and using the fact that $\mbox{rank}(\bar{X}) \le k$, we obtain
\begin{eqnarray*}
   \sum_{i=1}^m \sigma_i^p(\bar{X} + \bar{Z}) &\ge& \sum_{i=1}^m \left| \sigma_i^p(\bar{X}) - \sigma_i^p(\bar{Z}) \right| \\
   \noalign{\medskip}
   &=& \sum_{i=1}^k  \left| \sigma_i^p(\bar{X}) - \sigma_i^p(\bar{Z}) \right| + \sum_{i=k+1}^m  \left| \sigma_i^p(\bar{X}) - \sigma_i^p(\bar{Z}) \right| \\
   \noalign{\medskip}
   &\ge& \sum_{i=1}^k \sigma_i^p(\bar{X}) - \sum_{i=1}^k \sigma_i^p(\bar{Z}) + \sum_{i=k+1}^m \sigma_i^p(\bar{Z}) \\
   \noalign{\medskip}
   &>& \sum_{i=1}^m \sigma_i^p(\bar{X}).
\end{eqnarray*}
Since $\bar{X}'=\bar{X}+\bar{Z}$ is arbitrary, this shows that $\bar{X}$ is the unique optimal solution to Problem (\ref{eq:schatten}) when $\eta=0$.

Conversely, suppose there exists a $\bar{Z} \in \mathcal{N}(\mathcal{A}) \backslash \{\bz\}$ such that $\sum_{i=1}^k \sigma_i^p(\bar{Z}) \ge \sum_{i=k+1}^m \sigma_i^p(\bar{Z})$.  Let $\bar{Z} = U \left[\begin{array}{cc} \mbox{Diag}(\sigma(\bar{Z})) & \bz \end{array}\right] V^T$ be its singular value decomposition, and define
$$ \bar{X} = -U \left[\begin{array}{cc} \Sigma_1^k(\bar{Z}) & \bz \end{array}\right] V^T, \quad  \bar{X}' = U \left[\begin{array}{cc} \Sigma_{k+1}^m(\bar{Z}) & \bz \end{array}\right] V^T, $$
where
\begin{eqnarray*}
\Sigma_1^k(\bar{Z}) &=& \mbox{Diag}\left( \sigma_1(\bar{Z}),\ldots,\sigma_k(\bar{Z}),0,\ldots,0 \right) \in \mathcal{S}^m, \\
\noalign{\medskip}
\Sigma_{k+1}^m(\bar{Z}) &=&  \mbox{Diag}\left( 0,\ldots,0,\sigma_{k+1}(\bar{Z}),\ldots,\sigma_m(\bar{Z}) \right) \in \mathcal{S}^m.
\end{eqnarray*}
Clearly, we have $\mbox{rank}(\bar{X}) \le k$.  Moreover, since $\mathcal{A}(\bar{X}'-\bar{X}) = \mathcal{A}(\bar{Z}) = \bz$, we have $\mathcal{A}(\bar{X}) = \mathcal{A}(\bar{X}')$.  Now, using the definition of $\bar{Z}$, we compute
$$ \|\bar{X}\|_p^p = \sum_{i=1}^k \sigma_i^p(\bar{Z}) \ge \sum_{i=k+1}^m \sigma_i^p(\bar{Z}) = \|\bar{X}'\|_p^p. $$
This shows that $\bar{X}$ is not the unique optimal solution to Problem (\ref{eq:schatten}) when $\eta=0$ and $y=\mathcal{A}(\bar{X})$.
\end{proof}

\medskip
\noindent{\bf Proof of Theorem \ref{thm:transfer}}\,\,\, Consider an arbitrary $Z \in \mathcal{N}(\mathcal{A}) \backslash \{\bz\}$.  Let $Z = U \left[\begin{array}{cc} \mbox{Diag}(\sigma(Z)) & \bz \end{array}\right] V^T$ be its singular value decomposition.  Then, we have $\bz = \mathcal{A}(Z) = A_{U,V}(\sigma(Z))$.  Hence, Property~(E) and Fact \ref{fact:vec-ns} imply that $Z$ satisfies~(\ref{eq:mat-ns}).  The desired conclusion now follows from Proposition \ref{prop:mat-ns}. \endproof

\medskip
By invoking existing results in the literature and applying Theorem \ref{thm:transfer}, exact recovery properties of the Schatten $p$--quasi--norm heuristic~(\ref{eq:schatten}) can be derived in a rather straightforward manner.  As an illustration, let us establish two recovery conditions based on notions of restricted isometry for the Schatten $p$--quasi--norm heuristic.  We begin with the following simple observation:
\begin{prop} \label{prop:ext}
Let $m,n,r$ be integers such that $r \le m \le n$.  Let $\mathcal{A}:\R^{m\times n} \limto \R^l$ be a given linear operator.
\begin{enumerate}
\item[\subpb] Suppose there exists a constant $\alpha_r \in (0,1)$ such that
$$ (1-\alpha_r)\|X\|_F^2 \le \|\mathcal{A}(X)\|_2^2 \le (1+\alpha_r) \|X\|_F^2 $$
for all $X \in \R^{m\times n}$ with $\mbox{rank}(X) \le r$.  Then, for any orthogonal $U \in \R^{m\times m}$ and $V \in \R^{n\times n}$, the matrix $A_{U,V} \in \R^{l\times m}$ satisfies
\begin{equation} \label{eq:vec-rip}
(1-\alpha_r)\|x\|_2^2 \le \|A_{U,V}(x)\|_2^2 \le (1+\alpha_r) \|x\|_2^2 
\end{equation}
for all $x \in \R^m$ with $\|x\|_0\le r$.

\item[\subpb] Let $p \in (0,1]$ be given.  Suppose there exists a constant $\beta_{p,r} \in (0,1)$ such that
$$ (1-\beta_{p,r})\|X\|_F^p \le \|\mathcal{A}(X)\|_p^p \le (1+\beta_{p,r}) \|X\|_F^p $$
for all $X \in \R^{m\times n}$ with $\mbox{rank}(X) \le r$.  Then, for any orthogonal $U \in \R^{m\times m}$ and $V \in \R^{n\times n}$, the matrix $A_{U,V} \in \R^{l\times m}$ satisfies
\begin{equation} \label{eq:vec-rpip}
(1-\beta_{p,r})\|x\|_2^p \le \|A_{U,V}(x)\|_p^p \le (1+\beta_{p,r}) \|x\|_2^p
\end{equation}
for all $x \in \R^m$ with $\|x\|_0\le r$.
\end{enumerate}
\resetspb
\end{prop}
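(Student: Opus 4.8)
The plan is to prove both parts at once via a single lifting that turns a sparse vector into a low-rank matrix of matching Euclidean norm. Fix orthogonal $U \in \R^{m\times m}$ and $V \in \R^{n\times n}$, and let $x \in \R^m$ satisfy $\|x\|_0 \le r$. I would set $X = U\left[\begin{array}{cc} \mbox{Diag}(x) & \bz \end{array}\right]V^T \in \R^{m\times n}$ and record three elementary facts. First, since $U$ and $V$ are orthogonal, the singular values of $X$ are precisely the absolute values of the entries of $x$, so $\mbox{rank}(X) = \|x\|_0 \le r \le m$. Second, orthogonal invariance of the Frobenius norm gives $\|X\|_F = \|x\|_2$. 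Third, by the very definition of $A_{U,V}$, the images $A_{U,V}(x)$ and $\mathcal{A}(X)$ coincide as vectors in $\R^l$; hence $\|A_{U,V}(x)\|_2 = \|\mathcal{A}(X)\|_2$ and $\|A_{U,V}(x)\|_p = \|\mathcal{A}(X)\|_p$.

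With these observations in hand, part (a) follows by applying the rank-$r$ restricted isometry hypothesis for $\mathcal{A}$ to the matrix $X$---which is legitimate because $\mbox{rank}(X) \le r$---to obtain $(1-\alpha_r)\|X\|_F^2 \le \|\mathcal{A}(X)\|_2^2 \le (1+\alpha_r)\|X\|_F^2$, and then substituting the three facts above to recover (\ref{eq:vec-rip}). Part (b) is handled in exactly the same way, with the Frobenius norm and Euclidean norm replaced by the Schatten $p$-quasi-norm and $\ell_p$ quasi-norm: apply the hypothesized bound $(1-\beta_{p,r})\|X\|_F^p \le \|\mathcal{A}(X)\|_p^p \le (1+\beta_{p,r})\|X\|_F^p$ to $X$ and translate via the same facts to obtain (\ref{eq:vec-rpip}).

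I do not expect a genuine obstacle here; the proposition is essentially a bookkeeping statement recording how the sparse-vector restricted isometry property sits inside the low-rank-matrix one, under the embedding $x \mapsto U[\,\mbox{Diag}(x)\ \bz\,]V^T$. The only point needing (very minor) care is the verification of the first two facts---that padding $\mbox{Diag}(x)$ with a zero block and conjugating by orthogonal matrices neither changes the number of nonzero coordinates viewed as a rank nor the Euclidean norm viewed as a Frobenius norm---both of which are immediate from the singular value decomposition of $X$ and the orthogonal invariance of $\|\cdot\|_F$.
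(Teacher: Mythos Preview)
Your proposal is correct and follows essentially the same approach as the paper: define $X = U[\,\mbox{Diag}(x)\ \bz\,]V^T$, note $\mbox{rank}(X)\le r$, $\|X\|_F=\|x\|_2$, and $\mathcal{A}(X)=A_{U,V}(x)$, then substitute into the respective hypotheses. The paper's proof is in fact slightly terser than yours but otherwise identical.
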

\begin{proof}
Let $x \in \R^m$ be such that $\|x\|_0 \le r$.  For any orthogonal $U \in \R^{m\times m}$ and $V \in \R^{n\times n}$, the matrix $X = U \left[\begin{array}{cc} \mbox{Diag}(x) & \bz \end{array}\right] V^T \in \R^{m\times n}$ has rank at most $r$.  Moreover, we have $\|X\|_F = \|x\|_2$, $\|\mathcal{A}(X)\|_2 = \|A_{U,V}(x)\|_2$ and $\|\mathcal{A}(X)\|_p = \|A_{U,V}(x)\|_p$.  This completes the proof.
\end{proof}

\medskip
Condition (\ref{eq:vec-rip}) (resp.~(\ref{eq:vec-rpip})) implies that for any orthogonal $U \in \R^{m\times m}$ and $V \in \R^{n\times n}$, the matrix $A_{U,V} \in \R^{l\times m}$ satisfies the {\it restricted isometry property of order $k$}~\cite{CT05} (resp.~{\it restricted $p$--isometry property of order $k$}~\cite{CS08}) with constant at most $\alpha_r$ (resp.~$\beta_{p,r}$).  Hence, the results in~\cite{CS08,WC13}, together with Theorem \ref{thm:transfer}, imply the following recovery conditions:
\begin{thm}
Let $\mathcal{A}:\R^{m\times n} \limto \R^l$ be a given linear operator with $m \le n$, and let $p \in (0,1)$ be given.
\begin{enumerate}
\item[\subpb] (cf.~\cite{WC13}) Let $k\ge1$ be an integer such that $2k \le m$.  Suppose that $\mathcal{A}$ satisfies the hypothesis of Proposition \ref{prop:ext}(a) with $r=2k$, and that $p < \min\{1,1.0873 \times (1-\alpha_{2k})\}$.  Then, every matrix $\bar{X} \in \R^{m\times n}$ with $\mbox{rank}(\bar{X}) \le k$ and $y = \mathcal{A}(\bar{X}) \in \R^l$ can be exactly recovered by solving Problem (\ref{eq:schatten}) with $\eta = 0$.

\item[\subpb] (cf.~\cite[Theorem 2.4]{CS08}) Given an integer $k\ge1$ and a real number $b>1$, let $a = \lceil b^{2/(2-p)} k \rceil/k$.  Suppose that $\mathcal{A}$ satisfies the hypothesis of Proposition~\ref{prop:ext}(b) with $r=(a+1)k$, and that $\beta_{p,ak}+b\beta_{p,(a+1)k} < b-1$.  Then, every matrix $\bar{X} \in \R^{m\times n}$ with $\mbox{rank}(\bar{X}) \le k$ and $y = \mathcal{A}(\bar{X}) \in \R^l$ can be exactly recovered by solving Problem (\ref{eq:schatten}) with $\eta = 0$.
\end{enumerate}
\resetspb
\end{thm}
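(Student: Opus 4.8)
The plan is to derive both recovery conditions from Theorem~\ref{thm:transfer} by verifying that, under the stated hypotheses, the operator $\mathcal{A}$ possesses Property~(E); the actual content about $\ell_p$ recovery of sparse vectors will then be imported wholesale from the literature. Recall that Property~(E) asks exactly that for every pair of orthogonal matrices $U \in \R^{m\times m}$, $V \in \R^{n\times n}$ the induced matrix $A_{U,V} \in \R^{l\times m}$ belong to $\mathscr{S}_k^p$, i.e., that $A_{U,V}$ be a measurement matrix from which every $k$-sparse vector in $\R^m$ (note $m \ge k$ in both parts) is exactly recovered by solving~(\ref{eq:lp}). Once Property~(E) is in hand, Theorem~\ref{thm:transfer} immediately yields exact recovery of every $\bar X \in \R^{m\times n}$ with $\mathrm{rank}(\bar X) \le k$ by Problem~(\ref{eq:schatten}) with $\eta = 0$, which is the desired conclusion.

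For part (a) I would first apply Proposition~\ref{prop:ext}(a) with $r = 2k$: the assumed restricted isometry of $\mathcal{A}$ over rank-$\le 2k$ matrices transfers, via~(\ref{eq:vec-rip}), to the induced matrix $A_{U,V}$, so $A_{U,V}$ satisfies the restricted isometry property of order $2k$ with constant at most $\alpha_{2k}$. Since $\delta \mapsto 1.0873\,(1-\delta)$ is decreasing in $\delta$, the hypothesis $p < \min\{1,\,1.0873\,(1-\alpha_{2k})\}$ forces $p$ below the same threshold computed with the restricted isometry constant of $A_{U,V}$ itself, which is precisely the sufficient condition under which the result of~\cite{WC13} guarantees exact $\ell_p$ recovery of all $k$-sparse vectors. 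Hence $A_{U,V} \in \mathscr{S}_k^p$ for every admissible $U,V$; Property~(E) holds; and the claim follows.

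Part (b) is identical in structure, with the restricted $p$-isometry property replacing the ordinary one. Here $a = \lceil b^{2/(2-p)} k\rceil / k$ is chosen so that $ak = \lceil b^{2/(2-p)} k\rceil$ and $(a+1)k$ are integers; the hypothesis of Proposition~\ref{prop:ext}(b) with $r = (a+1)k$ in particular requires $(a+1)k \le m$. Applying that proposition and using~(\ref{eq:vec-rpip}) shows that every $A_{U,V}$ satisfies the restricted $p$-isometry property of order $(a+1)k$ with constant at most $\beta_{p,(a+1)k}$, and, by the monotonicity of restricted $p$-isometry constants in the sparsity order, also of order $ak$ with constant at most $\beta_{p,ak}$. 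The standing inequality $\beta_{p,ak} + b\,\beta_{p,(a+1)k} < b-1$ is then exactly the hypothesis of~\cite[Theorem~2.4]{CS08}, which gives exact $\ell_p$ recovery of every $k$-sparse vector via~(\ref{eq:lp}); so again $A_{U,V} \in \mathscr{S}_k^p$ for all orthogonal $U,V$, Property~(E) holds, and Theorem~\ref{thm:transfer} completes the proof. Since every ingredient is Proposition~\ref{prop:ext}, Theorem~\ref{thm:transfer}, or an off-the-shelf $\ell_p$-recovery theorem, the argument is essentially bookkeeping; the only point requiring a moment of care is aligning the exact form of the hypotheses in~\cite{WC13} and~\cite[Theorem~2.4]{CS08} with the restricted (p-)isometry constants of the induced matrices and, in part (b), invoking monotonicity of those constants in the order.
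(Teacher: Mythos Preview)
Your proposal is correct and follows exactly the paper's approach: the paper does not give a separate proof but simply notes, in the sentence preceding the theorem, that Proposition~\ref{prop:ext} transfers the restricted ($p$-)isometry property to each $A_{U,V}$, after which the cited results of~\cite{WC13} and~\cite{CS08} place $A_{U,V}$ in $\mathscr{S}_k^p$ and Theorem~\ref{thm:transfer} finishes. Your write-up is a faithful expansion of that one-line sketch.
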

For further applications of the perturbation inequality~(\ref{eq:key-ineq}) in the study of low--rank matrix recovery, we refer the reader to~\cite{LLLW12}.

\section{Conclusion} \label{sec:concl}
In this paper, we established the perturbation inequality (\ref{eq:cav-perturb}) concerning the singular values of a matrix.  Such an inequality has proven to be fundamental in understanding the recovery properties of the Schatten $p$--quasi--norm heuristic~(\ref{eq:schatten}).  Thus, a natural future direction is to find other applications of (\ref{eq:cav-perturb}) in the study of low--rank matrix recovery.  Another interesting direction is to prove or disprove the following generalization of~(\ref{eq:cav-perturb}), which has already attracted some attention in the linear algebra community:
\begin{conj}
(\cite[Conjecture 6]{AK12}) Let $A,B \in \R^{m\times n}$ be given.  Suppose that $f:\R_+\limto\R_+$ is a concave function satisfying $f(0)=0$.  Then, for any $k \in \{1,\ldots,\min\{m,n\}\}$,
$$
\sum_{i=1}^k \big| f(\sigma_i(A)) - f(\sigma_i(B)) \big| \le \sum_{i=1}^k f(\sigma_i(A-B)).
$$
\end{conj}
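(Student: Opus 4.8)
The most direct line of attack is to retrace the five--step argument behind Theorem~\ref{thm:cav-perturb}, upgrading every step from the full sum $\sum_{i=1}^{n}$ to the truncated sums $\sum_{i=1}^{k}$ (equivalently, from an identity of sums to a weak--majorization statement). Two of the steps carry over with essentially no change. First, Step~1 (reduction to the symmetric case): since $\Xi$ doubles the index set and $\sigma_{2j-1}(\Xi(Z))=\sigma_{2j}(\Xi(Z))=\sigma_j(Z)$ by \eqref{eq:sym-sv}, it suffices to prove that $\sum_{i=1}^{k}\big|f(\sigma_i(M))-f(\sigma_i(N))\big|\le\sum_{i=1}^{k}f(\sigma_i(M-N))$ for all $M,N\in\mathcal{S}^{n}$ and all $k$. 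Second, Step~5 (removing the well--behavedness assumption): the mollification $f_\delta(x)=\min\{(f(\delta)/\delta)\,x,f(x)\}$ still works, so one may assume throughout that $f$ is well--behaved. As consistency checks, the case $k=n$ is Theorem~\ref{thm:cav-perturb} and the case $f=\mathrm{id}$ is exactly Mirsky's inequality \eqref{eq:mir-ineq}; the positive semidefinite case, where $\Xi$ is unnecessary, is a natural warm--up.

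Step~2 (the local expansion) survives, but its conclusion changes character. Combining the supergradient bound \eqref{eq:supergrad-ineq} with Fact~\ref{fact:singval-perturb} and summing only the first $k$ terms gives, for any spectrum--sorting permutation $\pi$ of $M$,
\[
\sum_{i=1}^{k} f(\sigma_i(M+tN)) \le \sum_{i=1}^{k} f(\sigma_i(M)) + t\cdot\Phi_k(N;M) + O(t^2),
\]
where $\Phi_k(N;M)$ is a $\bar d_f$--weighted sum of the \emph{leading} eigenvalues of the blocks $(Q^j)^{T}\Xi(N)Q^j$. By Ky~Fan's maximum principle, $\Phi_k(\,\cdot\,;M)$ is the support function of an explicit compact convex set $\mathcal{K}_k(M)\subseteq\mathcal{S}^{n}$, whose block structure is dictated by the multiplicities of the $\sigma_i(M)$; when $k=n$ this set degenerates to the single point $M_\pi$, and $\Phi_n$ reduces to the linear functional $N\mapsto\mathrm{tr}(NM_\pi)$ of Theorem~\ref{thm:schatten-local}.

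Step~3 is where the difficulty concentrates, and I expect it to be the main obstacle. As in Theorem~\ref{thm:min-comm}, let $Q_0$ minimize $Q\mapsto\sum_{i=1}^{k}f(\sigma_i(\Sigma_A-Q\Sigma_BQ^{T}))$ over $\mathcal{O}^{n}$, put $\bar B=Q_0\Sigma_BQ_0^{T}$ and $C=\Sigma_A-\bar B$, and perturb along $V(t)=\exp(tD)$ with $D=-D^{T}$. The first--order change is now $t\cdot\Phi_k(\bar BD-D\bar B;C)=t\cdot\sup_{G\in\mathcal{K}_k(C)}\langle D,\,\bar BG-G\bar B\rangle$, a maximum rather than a single linear functional. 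To contradict the minimality of $Q_0$ when $[\bar B,C]\ne\bz$ one must exhibit a \emph{single} skew--symmetric $D$ for which this supremum is strictly negative, i.e.\ for which $\langle D,\bar BG-G\bar B\rangle<0$ simultaneously for every $G\in\mathcal{K}_k(C)$. For $k=n$ the set $\mathcal{K}_n(C)$ is a point and $D=C_\pi\bar B-\bar BC_\pi$ yields $-\|D\|_F^{2}<0$ exactly as in \eqref{eq:first-order}; for $k<n$ the requirement amounts to separating $\bz$ from the convex set $\{\bar BG-G\bar B:G\in\mathcal{K}_k(C)\}$ by a skew--symmetric functional, and there is no a~priori reason this should always be possible --- this is precisely the point at which the conjecture becomes delicate.

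Assuming Step~3 can be carried out, Step~4 should follow the paper's template: a density argument reduces to $A$ with distinct eigenvalues, Proposition~\ref{prop:diag-comm} then forces $\bar B$ to be diagonal, and the problem collapses to a truncated--sum refinement of the scalar rearrangement inequality \cite[Proposition~1]{AK12}, namely that for the alignment $\theta$ produced by $Q_0$ the vector $\big(|f(\sigma_i(A))-f(\sigma_i(B))|\big)_i$ is weakly majorized by $\big(f(|\sigma_i(A)-\lambda_{\theta_i}(B)|)\big)_i$. I would not expect the pointwise estimate $|f(a)-f(b)|\le f(|a-b|)$ combined with the weak--majorization form of Mirsky's inequality to suffice, since concave increasing maps do not preserve weak majorization from above; a sharper argument, presumably exploiting the minimizer structure from Step~3, appears to be needed.
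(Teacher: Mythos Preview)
The statement you are attempting is presented in the paper as an \emph{open conjecture} (Conjecture~1 in Section~\ref{sec:concl}); the paper contains no proof of it. There is therefore nothing in the paper to compare your proposal against.

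As a proof, your proposal has a genuine gap, and you have identified it yourself. Steps~1, 2, and~5 do carry over essentially as you describe, and your reformulation of the truncated first--order variation $\Phi_k(\,\cdot\,;M)$ as the support function of a compact convex set $\mathcal{K}_k(C)$ (via Ky~Fan's maximum principle) is the right picture. But the heart of the paper's argument is Theorem~\ref{thm:min-comm}, where the choice $D=C_\pi\bar{B}-\bar{B}C_\pi$ makes the first--order term equal to $-\|D\|_F^2<0$ precisely because $\mathcal{K}_n(C)=\{C_\pi\}$ is a single point. For $k<n$ you need a single skew--symmetric $D$ with $\langle D,\bar{B}G-G\bar{B}\rangle<0$ for \emph{every} $G\in\mathcal{K}_k(C)$, i.e.\ a strict separating hyperplane for $\bz$ and the set $\{\bar{B}G-G\bar{B}:G\in\mathcal{K}_k(C)\}$ within the skew--symmetric matrices. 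You offer no mechanism for producing such a $D$, and indeed there is no obvious reason one exists in general; this is exactly the obstruction that leaves the conjecture open.

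Your Step~4 has a second acknowledged gap: even granting that the minimizer $Q_0$ diagonalizes $\bar{B}$, the scalar inequality you need is a weak--majorization strengthening of \cite[Proposition~1]{AK12}, and as you note, concave increasing maps do not preserve weak majorization from above, so the reduction to scalars is not automatic either. In short, your write--up is an accurate and well--organized diagnosis of \emph{why} the conjecture is hard, but it is not a proof.
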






\bibliography{sdpbib}
\bibliographystyle{abbrv}

\end{document}